\def\pd#1{\dfrac{\partial}{\partial #1}}
\def\f#1#2{\frac{#1}{#2}}
\def\pa{\partial}
\def\n{\nabla}
\def\a{\alpha}
\def\b{\beta}
\def\({\left (}
\def\){\right )}
\def\<{\langle}
\def\>{\rangle}
\newcommand{\bel}[1]{\begin{equation}\label{#1}}
\newcommand{\lab}[1]{\label{#1}}
\newcommand{\be}{\begin{equation}}
  \newcommand{\beq}{\begin{equation}}
\newcommand{\ba}{\begin{eqnarray}}
\newcommand{\ea}{\end{eqnarray}}
\newcommand{\rf}[1]{(\ref{#1})}
\newcommand{\qe}{\end{equation}}
\newcommand{\eeq}{\end{equation}}
\newtheorem{thm}{Theorem}[section]
\newtheorem{cor}[thm]{Corollary}
\newtheorem{lem}[thm]{Lemma}
\newtheorem{prop}[thm]{Proposition}
\newtheorem{defn}[thm]{Definition}
\newtheorem{rem}[thm]{Remark}
\newtheorem{claim}{Claim}[section]
\newtheorem*{ack}{Acknowledgement}
\newcommand{\norm}[1]{\left\Vert#1\right\Vert}
\newcommand{\abs}[1]{\left\vert#1\right\vert}
\newcommand{\set}[1]{\left\{#1\right\}}
\newcommand{\R}{\mathbb R}
\newcommand{\eps}{\varepsilon}
\newcommand{\pfrac}[2]{\frac{\partial #1}{\partial #2}}
\newcommand{\Om}{\Omega}
\newcommand{\om}{\omega}
\newcommand{\Sgm}{\Sigma}
\newcommand{\sub}{\subset}
\newcommand{\red}[1]{ {\color{red} #1} }
\newcommand{\tr}{\mbox{\rm tr\,}}
\title[Stability of VPMCF and APMCF in
Schwarzschild space]{Stability of volume and area preserving mean curvature flows in the
Schwarzschild space}
\author{Yaoting Gui, Yuqiao Li, Jun Sun}
\keywords{Schwarzschild space; Volume preserving mean curvature flow; Area preserving mean curvature flow; long time existence and convergence}
\address{Yaoting Gui, School of Mathematical Sciences, Xiamen University, Xiamen, 361005, P.R.China.;}
\address{
Yuqiao Li, Department of Mathematics, Hefei University of Technology, Hefei, 230009, P.R.China.}
\address{
Jun Sun, School of Mathematics and Statistics, Wuhan University, Wuhan, 430072, P.R.China.}
\email{ytgui@bicmr.pku.edu.cn, lyq112@mail.ustc.edu.cn, sunjun@whu.edu.cn}
\thanks {2020 Mathematics Subject Classification: 58J37,53E10.}
\begin{document}

\numberwithin{equation}{section}

\begin{abstract}
    In this paper, we investigate the stability of the volume preserving mean curvature flow (VPMCF) and area preserving mean curvature flow (APMCF) in the Schwarzschild space. 
    We show that if the initial hypersurface is sufficiently close to a coordinate sphere, these flows exist globally in time and converge smoothly to a constant mean curvature (CMC) hypersurface, namely a coordinate sphere. For asymptotically Schwarzschild spaces, if the initial hypersurface is sufficiently close to an isoperimetric hypersurface, outside of some large compact set, the flow still exists for all time and converges to a CMC hypersurface exponentially fast.
\end{abstract}

\maketitle

\section{ Introduction}
\allowdisplaybreaks

Extrinsic curvature flow has proven to be a powerful tool for exploring geometric and topological properties of submanifolds. For example, Huisken pioneered the study of mean curvature flow \cite{MR0837523} and its volume preserving variant in Euclidean space \cite{MR0921165}, demonstrating that convex hypersurfaces are always diffeomorphic to spheres. As a corollary of the convergence of volume preserving flow, he showed convex domains satisfy the isoperimetric inequality in \cite{MR0921165}. Earlier results concerning planar curves have been investigated by Gage-Hamilton (\cite{MR840401}), Grayson (\cite{MR906392}) and Gage (\cite{MR848933}).

Let us first set up the basic background. Let $M^n$ be a closed manifold and $N^{n+1}$ be a Riemannian manifold. For a given submanifold $F_0:M^n\to N^{n+1}$, the \textbf{volume preserving mean curvature flow (VPMCF)} is a family of maps $F_t=F(\cdot,t):M^n\to N^{n+1}$ evolving by
\begin{equation}\label{e-VPMCF}
\begin{cases}   
    \frac{\partial F}{\partial t}(x, t)&=[h(t)-H(x,t)]\nu(x,t),\\
    F(\cdot,0)&=F_0,
\end{cases}
\end{equation}
where $H$ is the mean curvature of $M_t=F_t(M),\ \nu$ is the outward unit normal to $\Sigma_t$, and
\begin{equation*}
h(t)=\frac{\int_{M_t}Hd\mu_t}{\int_{M_t}d\mu_t}
\end{equation*}
is the average of the mean curvature. 

Along the VPMCF, the volume of the region $\Omega_t$ enclosed by $M_t$ remains constant, while the area of $M_t$ is nonincreasing (see (\ref{e-VPMCF-vol}) and (\ref{e-VPMCF-area})). As a consequence, the application of the VPMCF leads to the isoperimetric inequality, which relates the area of a hypersurface to its enclosed volume. If the VPMCF exists globally and converges smoothly as $t\to\infty$, the limiting hypersurface must have constant mean curvature (CMC). This approach was employed by Huisken-Yau \cite{huisken1996definition} to construct the constant mean curvature surface foliation in the asymptotic Schwarzschild space. They showed that starting from a large coordinate sphere ensures the global existence and convergence to a CMC surface exponentially fast.

Stability results for the VPMCF have also been established in space forms. Li (\cite{li2009volume}) proved the long-term existence and convergence in Euclidean space under the assumption that the integral of the  traceless second fundamental form is small, similar results are obtained in other space forms (\cite{XuHW2014spaceform}, \cite{MR4762133}, \cite{MR4910985}). 
Additionally, Miglioranza studied the VPMCF on compact manifolds, assuming the initial surface lies within a small geodesic ball \cite{Miglioranza2020TheVP}. See also Freire-Alikakos \cite{alikakos2003normalized} and \cite{escher1998volume}. 

\vspace{.1in}

In this paper,  we mainly focus on the volume preserving mean curvature flow (VPMCF) in Schwarzschild space as a typical example; see \cite{escher1998volume} for similar results in the Euclidean space. Recall that an $(n+1)$-dimensional Riemannian manifold $(N^{n+1},\bar{g})$ is referred to as the \textbf{Schwarzschild space of mass $m$} if $N$ is diffeomorphic to ${\mathbb R}^{n+1}\setminus B_1$ and 
\begin{equation*}
    \bar{g}=\phi^{\f4{n-1}}\delta,
\end{equation*}
where $\phi(x)=1+\f m{2r^{n-1}}$, $r=\abs{x}_{\delta}>(\f m2)^{\f1{n-1}}$, and $m$ is a positive constant called the mass. The hypersurface $\Sigma_0:r=(\f m2)^{\f1{n-1}}$ is called the horizon. It models the black hole in physical world. We remark that the Schwarzschild space refered  here is usually called half Schwarzschild space compared with the doubly Schwarzschild space, which is a double of our half Schwarzschild space, across the boundary. Here because the uniqueness result are not true any more in doubly Schwarzschild space\cite{Brendle2013constant}, we should work with the Schwarzschild space defined here. 

Before stating the main thoerem, we need the following definition.
\begin{defn}
    A hypersurface $M$ is said to be homologous to the horizon if the enclosed domain $\Omega$ is bounded by $M$ and $\Sigma_0$.
\end{defn}
Our first main result establishes the long-time existence and convergence of the VPMCF in Schwarzschild space for initial hypersurfaces sufficiently close to any coordinate sphere. 
\begin{thm}\label{thm1.1}
    Let $(N^{n+1},\bar{g})$ be the Schwarzschild space of mass $m>0$, and let $F:M\times I\rightarrow N$ denote the VPMCF starting from an initial hypersurface $M_0$ that is homologous to the horizon $\Sigma_0$. Let $\Om_0$ be the region enclosed by $M_0$ and $\Sigma_0$.
   There exists a constant $\delta>0$ such that if $M_0$ is $\delta$-close in $C^2$-norm to a coordinate sphere $\Sigma_{r_0}$, and the volume of $\Omega_0$ equals the volume of the domain enclosed by $\Sigma_{r_0}$ and $\Sigma_{0}$, then the flow exists globally and converges smoothly to $\Sigma_{r_0}$. 
\end{thm}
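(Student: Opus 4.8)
The plan is to reduce the flow to a scalar quasilinear parabolic equation over a fixed reference sphere, establish uniform a priori estimates that keep the evolving hypersurface graphically close to $\Sgm_{r_0}$, and then extract convergence to a CMC sphere which the volume constraint forces to be $\Sgm_{r_0}$ itself. First I would record the two structural features of the flow. Writing the normal speed as $f=h(t)-H$, a standard first-variation computation gives that the enclosed volume of $\Om_t$ is exactly preserved while the area is monotone, with $\frac{d}{dt}\abs{M_t}=-\int_{M_t}(H-h)^2\,d\mu_t\le 0$. Since each coordinate sphere $\Sgm_{r}$ centered at the origin is totally umbilic and has constant mean curvature in the rotationally symmetric metric $\bar g$, it is a stationary solution of \eqref{e-VPMCF}; thus the theorem is a genuine stability statement about the equilibrium $\Sgm_{r_0}$. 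Using the normal exponential map I would then represent $M_t=\{\exp_p(u(p,t)\nu(p)):p\in\Sgm_{r_0}\}$ for a function $u$ on the sphere, small in $C^2$ for a short time by hypothesis; in these coordinates \eqref{e-VPMCF} becomes a nonlocal quasilinear parabolic equation $\partial_t u=a^{ij}(x,u,\n u)\,\n_i\n_j u+b(x,u,\n u)+h(t)$, whose linearization at $u\equiv 0$ is essentially $-Lu$, where $L=\lap_{\Sgm_{r_0}}+\abs{A}^2+\overline{\mathrm{Ric}}(\nu,\nu)$ is the Jacobi operator acting on the subspace $\int_{\Sgm_{r_0}}u=0$ singled out by volume preservation. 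Short-time existence follows from standard parabolic theory.

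The heart of the argument is the uniform estimate. I would control the traceless second fundamental form $\mra$ together with the radial deviation of the graph, showing that as long as the flow remains $\delta$-close to $\Sgm_{r_0}$ these quantities satisfy differential inequalities forcing them to stay small and to decay. The driving mechanism is the strict stability of the coordinate spheres in Schwarzschild: on mean-zero functions the Jacobi operator $L$ has a spectral gap (the Euclidean degeneracy coming from translations is broken by the mass term), so the second variation of area controls $\int(H-h)^2$ and $\int\mra^2$ from below, and the area monotonicity then drives these integrals to zero. Feeding this back through interpolation and parabolic integral ($L^p$ and Huisken-type) and Schauder estimates yields uniform $C^{2,\alpha}$, hence $C^\infty$, bounds, and shows that the $C^2$-closeness to $\Sgm_{r_0}$ persists for all time — a continuity/bootstrap argument in which the smallness of $\delta$ is exactly what is needed to absorb the nonlinear and nonlocal error terms.

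With uniform estimates in hand, long-time existence is immediate, and $\frac{d}{dt}\abs{M_t}\le 0$ with $\abs{M_t}$ bounded below forces $\int(H-h)^2\,d\mu_t\to 0$. Invoking the spectral gap a second time I would upgrade this to genuine, in fact exponential, convergence of $u$ to a limit in $C^\infty$, with the limiting hypersurface having constant mean curvature. Finally, the preserved volume of $\Om_t$ together with the classification of CMC hypersurfaces homologous to the horizon in Schwarzschild — which are precisely the coordinate spheres — shows the limit must be a coordinate sphere, and the volume constraint selects its radius, identifying the limit as exactly $\Sgm_{r_0}$.

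The main obstacle I anticipate is the uniform estimate step: converting the \emph{linear} strict stability of $\Sgm_{r_0}$ into \emph{nonlinear} stability while simultaneously controlling the nonlocal averaging term $h(t)$ and preventing the graph function $u$ from leaving the region where the parabolic equation stays uniformly elliptic. Keeping $M_t$ graphically close to the sphere for \emph{all} time, rather than merely up to some finite time, is where the interplay between area decay, the spectral gap, and the smallness of $\delta$ must be made quantitative, and it is here that the bulk of the technical work will lie.
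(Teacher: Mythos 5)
Your proposal is sound in its key claims, but it follows a genuinely different route from the paper. The paper's proof of Theorem \ref{thm1.1} contains no linearization and no spectral theory: it exploits the fact that volume preservation plus area decrease pins the isoperimetric ratio $I(t)=\abs{M_t}^{n+1}/\abs{\Omega_t}^n$ between $I_S(|\Omega_0|)$ and $I_S(|\Omega_0|)+\rho_0$, propagates two-sided principal curvature bounds for a definite time $C_n$ by a maximum principle applied to smooth approximations of $\kappa_{\max}$ and $\kappa_{\min}$, and then invokes a compactness (``perturbation'') theorem --- proved in the style of minimal-surface compactness, with the isoperimetric pinching ruling out limits of multiplicity at least two --- to conclude that at the maximal time the hypersurface is again $C^2$-close to a coordinate sphere with curvatures in $[\tfrac12,2]$; iterating this extension argument gives global existence. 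Convergence is then soft: area monotonicity together with uniform space-time Lipschitz bounds on $H$ forces $\max_{M_t}\abs{H-h}\to 0$, and Brendle's uniqueness of CMC hypersurfaces homologous to the horizon, combined with the preserved volume, identifies every subsequential limit as $\Sigma_{r_0}$, hence full convergence. Your route --- scalar graph equation over $\Sigma_{r_0}$, strict volume-preserving stability of coordinate spheres, nonlinear stability bootstrap, exponential decay --- is the classical Escher--Simonett-type argument, and is essentially what the paper itself does in Section 4 for asymptotically Schwarzschild spaces (linearization at the limiting CMC hypersurface plus center-manifold/maximal-regularity theory from \cite{Analytic1995}). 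What your approach buys is a quantitative exponential rate, which the paper's Section 3 argument does not provide. What it costs is twofold. First, you must prove the spectral gap for \emph{every} admissible radius $r_0>(m/2)^{1/(n-1)}$, not only for large spheres as in Huisken--Yau; this is in fact true in exact Schwarzschild (a direct computation gives a first eigenvalue of order $m\,r_0^{-(n+1)}$ on mean-zero functions, nondegenerate even near the horizon), but it is a computation your write-up must actually carry out rather than cite. Second, closing the nonlinear estimate in the presence of the nonlocal term $h(t)$ --- the step you correctly flag as the bulk of the work --- is delicate with an interpolation/Schauder bootstrap alone; the standard successful executions (Escher--Simonett, and the paper's own Section 4) run this step through maximal regularity or center-manifold machinery rather than energy estimates. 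Neither caveat invalidates your strategy, but both mark where your argument must do real work that the paper's isoperimetric-compactness route deliberately bypasses.
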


\begin{rem}
    This approach can be extended to general warped product spaces under mild assumptions \cite{Brendle2013constant},  as it depends only on the uniqueness of constant mean curvature (CMC) hypersurfaces and curvature estimates in the ambient space.
\end{rem}

\begin{rem}
Unlike Huisken-Yau’s result (\cite{huisken1996definition}), no large-radius assumption for the initial sphere is required.
\end{rem}

An alternative approach to the isoperimetric inequality involves the area-preserving mean curvature flow, where $F_t=F(\cdot,t):M^n\to N^{n+1}$ evolves according to
\begin{equation}\label{e-APMCF}
\begin{cases}   
    \frac{\partial F}{\partial t}(x, t)&=[h_0(t)-H(x,t)]\nu(x,t),\\
    F(\cdot,0)&=F_0,
\end{cases}
\end{equation}
where
\begin{equation*}
h_0(t)=\frac{\int_{M_t}H^2d\mu_t}{\int_{M_t}Hd\mu_t}.
\end{equation*}
For a closed manifold $M^n$ and a Riemannian manifold $N^{n+1}$, the \textbf{area-preserving mean curvature flow (APMCF)} evolving from a mean convex submanifold $F_0:M^n\to N^{n+1}$ preserves the area of $M_t$ while the enclosed volume $\Omega_t$ increases over time. If the isoperimetric inequality holds, this yields a uniform upper bound on $\text{vol}(\Omega_t)$, enabling proofs of the inequality via the APMCF.

McCoy proved that if the initial hypersurface is uniformly convex in ${\mathbb R}^{n+1}$, the APMCF exists globally and converges to a round sphere as $t\to\infty$ (\cite{MR2031450}). Recently, we established stability results for the APMCF in asymptotically Schwarzschild space (\cite{Gui-Li-Sun}). This stability says that the flow will finally converge to a coordinate sphere when the $L^2$ norm of the traceless second fundamental form is sufficiently small. Furthermore, we reconstruct the existence of Huisken-Yau's CMC foliation in the asymptotic Schwarzschild space using the APMCF, replacing the VPMCF employed by Huisken and Yau. 

Analogous to Theorem \ref{thm1.1}, another main result of this paper establishes the long-time existence and convergence of the APMCF in the Schwarzschild space of mass $m>0$ when the initial hypersurface is still sufficiently close to a coordinate sphere. Specifically,
\begin{thm}\label{thm1.1-2}
    Let $(N^{n+1},\bar{g})$ be the Schwarzschild space of mass $m>0$, and let $F:M\times I\rightarrow N$ be an APMCF starting from an initial hypersurface $M_0$ which is homologous to the horizon $\Sigma_0$, a coordinate sphere of radius ${(\frac{m}{2})}^{\f1{n-1}}$. 
   There exists a constant $\delta>0$ such that if $M_0$ is $\delta$-close in $C^2$-norm to a coordinate sphere $\Sigma_{r_0}$ with the area of $M_0$ equals the area of $\Sigma_{r_0}$, then the flow exists globally and converges smoothly to $\Sigma_{r_0}$. 
\end{thm}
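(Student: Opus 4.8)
The plan is to run a continuity (bootstrap) argument in parallel with the scheme for the VPMCF in Theorem~\ref{thm1.1}, with the non-local term and the monotone quantity adapted to the area-preserving setting. Short-time existence of \eqref{e-APMCF} is standard, so fix a small $\delta_0$ (to be determined) and set
\[
T^\ast=\sup\{t:\ M_s \text{ is } \delta_0\text{-close in } C^2 \text{ to } \Sgm_{r_0}\ \text{for all } s\le t\}.
\]
The goal is to show that on $[0,T^\ast)$ the surfaces in fact become rounder, so that $T^\ast$ cannot be finite and the flow extends and converges. Two structural facts are used throughout. First, area is exactly preserved: by the defining choice $h_0=\int H^2/\int H$ one computes $\frac{d}{dt}\mathrm{Area}(M_t)=\int_{M_t}H(h_0-H)\,d\mu_t=0$, while Cauchy--Schwarz gives $\frac{d}{dt}\mathrm{Vol}(\Om_t)=\int_{M_t}(h_0-H)\,d\mu_t\ge0$. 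Second, since coordinate spheres are stationary for \eqref{e-APMCF} (for them $h_0\equiv H$), they serve as barriers, and together with the closeness assumption and the homology to $\Sgm_0$ they confine $M_t$ to a fixed compact annular region bounded away from both the horizon and infinity. This confinement yields uniform constants in all ambient-curvature estimates and in the Sobolev/Poincar\'e inequalities on $M_t$.

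The core of the argument is an exponential decay estimate for the traceless second fundamental form. I would derive the evolution equation
\[
\partial_t \mra^2=\Delta \mra^2-2\abs{\nabla\mathring A}^2+\mathcal R,
\]
where $\mathcal R$ collects the zeroth-order reaction terms that are cubic in the second fundamental form or linear in the ambient curvature of $\bar g$, and then integrate over $M_t$. The non-local multiplier must be controlled first: Cauchy--Schwarz shows that $h_0$ is no smaller than the average of $H$, and under $C^2$-closeness one has $h_0=H_{r_0}+O\big((\int_{M_t}\mra^2)^{1/2}\big)$, so the cubic reaction term is absorbable once $\mra$ is small in $L^2$. Combining this with a Simons-type identity and the spectral gap of the Jacobi (stability) operator of $\Sgm_{r_0}$ as a strictly stable critical point of the constrained isoperimetric problem, I expect a differential inequality
\[
\frac{d}{dt}\int_{M_t}\mra^2\,d\mu_t\le -c\int_{M_t}\mra^2\,d\mu_t,\qquad c>0,
\]
whence $\int_{M_t}\mra^2\,d\mu_t\le e^{-ct}\int_{M_0}\mra^2\,d\mu_0$.

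Given $L^2$-decay, I would bootstrap to higher order: writing $M_t$ as a graph over $\Sgm_{r_0}$ and using interior Schauder estimates for the quasilinear parabolic system, interpolation upgrades the $L^2$-decay of $\mra$ to exponential decay of $\mra$ in every $C^k$-norm. This forces $M_t$ to stay inside the $\delta_0$-neighborhood, closing the continuity argument so that $T^\ast=\infty$. Since the normal speed $h_0-H$ is controlled by the oscillation of $H$, hence by $\mra$, the graph functions converge exponentially; the limit $\Sgm_\infty$ is smooth with $\mra\equiv0$, i.e. a CMC hypersurface homologous to the horizon. By the uniqueness of such hypersurfaces in Schwarzschild space (Brendle \cite{Brendle2013constant}), $\Sgm_\infty=\Sgm_{r_\infty}$ is a coordinate sphere; area preservation and the strict monotonicity of $r\mapsto\mathrm{Area}(\Sgm_r)$ then give $\mathrm{Area}(\Sgm_{r_\infty})=\mathrm{Area}(M_0)=\mathrm{Area}(\Sgm_{r_0})$, so $r_\infty=r_0$.

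The step I expect to be the main obstacle is establishing the strict sign in the energy inequality, i.e. showing that the reaction terms $\mathcal R$ are uniformly dominated by the good gradient term plus the Poincar\'e/spectral-gap contribution. The Schwarzschild curvature is non-constant and decays only like $O(m/r^{n+1})$, and the multiplier $h_0$ couples all of $M_t$, so one must verify that the coordinate sphere $\Sgm_{r_0}$ is \emph{strictly} stable and that this gap survives the perturbation to nearby non-round surfaces uniformly in $t$. This is exactly where the $C^2$-closeness to $\Sgm_{r_0}$ and the confinement to a fixed compact annulus are indispensable, and where the analysis genuinely departs from the Euclidean and space-form cases.
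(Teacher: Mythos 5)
Your plan is a genuinely different route from the paper's: you run a continuity argument driven by an $L^2$-energy decay for $\mathring{A}$ plus a spectral gap, whereas the paper never touches the Jacobi operator of $\Sgm_{r_0}$ in this setting. Unfortunately, the core step of your scheme has a gap that you yourself flag but do not resolve, and in Schwarzschild space it is not a technicality. In a space form, the Codazzi equation gives $|\n H|\leq C|\n \mathring{A}|$ and the reaction terms in the evolution of $\mra^2$ are purely cubic in $A$, so smallness of $\int\mra^2$ is self-improving. In Schwarzschild space this fails: the evolution of $\mathring{A}$ and the Codazzi identity pick up the terms $\bar{R}_{\nu i\nu j}$, $\bar{\n}\bar{R}m$ and $\omega=\bar{R}ic(\nu,\cdot)^{T}$, whose trace-free projections do \emph{not} vanish with $\mathring{A}$; they vanish only when the surface is a coordinate sphere (so that $\nu$ is radial), and for a nearby surface they are of the size of the $C^1$ graph distance to $\Sgm_{r_0}$, which for fixed $r_0$ carries the non-small factor $m/r_0^{n+1}$. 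Consequently neither the oscillation of $H$ nor the nonlocal multiplier $h_0$ is controlled by $\mra$ alone, and the inequality $\frac{d}{dt}\int_{M_t}\mra^2\,d\mu_t\leq -c\int_{M_t}\mra^2\,d\mu_t$ cannot be closed without simultaneously proving decay of the graph function --- i.e.\ without already proving the linearized stability statement you are trying to deduce. (A secondary error: your claim that stationary coordinate spheres ``serve as barriers'' is false, since nonlocal flows such as \eqref{e-APMCF} do not satisfy the avoidance principle --- at a touching point the speed is $h_0(t)-H$, and $h_0(t)$ is a global quantity with no sign relation to the barrier's mean curvature. This is harmless only because your continuity hypothesis already gives confinement.)

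The paper closes exactly this loop by a mechanism absent from your proposal: the isoperimetric ratio. Since the APMCF preserves area and (by \eqref{e-APMCF-vol-2}) does not decrease the enclosed volume, $I(t)=\abs{M_t}^{n+1}/\abs{\Om_t}^n$ stays within $\rho_0$ of the optimal value $I_S$ for all time; the compactness theorem (Theorem \ref{thm3.4}, with multiplicity one forced precisely by the isoperimetric pinching) and Corollary \ref{cor3.6} then convert curvature bounds at the maximal time into $C^2$-closeness to a coordinate sphere, which refines the principal curvature bounds from $[\f14,4]$ back to $[\f12,2]$ and lets the iteration restart --- this replaces your energy decay entirely for long-time existence. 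For convergence, instead of exponential decay the paper proves (Lemma \ref{lem3.7}) that $\frac{d}{dt}{\rm Vol}(\Om_t)\geq 0$ has bounded time-derivative and ${\rm Vol}(\Om_t)$ is bounded above by the isoperimetric inequality with fixed area, whence $\int_{M_t}(h_0-H)d\mu_t\to 0$ and, by elliptic estimates, $H-h_0\to 0$ uniformly; full (smooth, not exponential) convergence then follows from Brendle's uniqueness of CMC hypersurfaces. Exponential convergence via linearization and a spectral gap is used by the paper only in Section 4, for the asymptotically Schwarzschild setting, and there it is carried out as a graph linearization with center-manifold theory rather than as an $\mra$-energy estimate --- which is the correct way to implement the idea you were reaching for.
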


\vspace{.1in}

We now briefly outline the proofs of the main theorems. We  provide a unified proof for the long-time existence of both the VPMCF and APMCF. Our method builds on Miglioranza’s thesis, where he proved the long-time existence and subconvergence of the VPMCF in small geodesic balls within general Riemannian manifolds \cite{Miglioranza2020TheVP} and also borrow some idea from minimal surface compactness. The proof we employ here primarily leverages the isoperimetric property and the volume- or area-preserving nature of the flows, showing that under isoperimetric ratio conditions, these quantities remain close to those of coordinate spheres throughout the flow. This ensures that the principal curvatures stay near those of a coordinate sphere, and a perturbation theorem guarantees the limiting hypersurface's closeness to the coordinate sphere. Here, the perturbation theorem states the following: 
\begin{thm}\lab{thm1.3}
    Let  $\Om_k\Subset \Om\sub N$ be a sequence of bounded domains in the Schwarzschild space, where $\Omega$ contains the horizon, and let  $M_k\subset\pa \Om_k$ be smooth boundary components homologous to the horizon (in particular, $\pa\Om_k$ is embedding). Let $\Om_0$ denote the domain enclosed by the horizon and some coordinate sphere such that there exist two positive constants $C_i, i=1,2$, for all $k$, satisfying
    \begin{enumerate}
        \item $\abs{\Om_k}=\abs{\Om_0}, \abs{\pa\Om_k}\leq C_1$;
        \item $\abs{A}_{\pa\Om_k}\leq C_2, \abs{\n A}_{\pa\Om_k}\leq C_2$;
        \item $\f{\abs{M_k}^{n+1}}{|{\Om_k}|^n}\leq I_S(|\Omega_0|)+\rho_k$, with $\rho_k\rightarrow0$ as $k\to \infty$;
    \end{enumerate}
    where $|\cdot|$ represents the volume of a domain or the area of a hypersurface, $A$ is the second fundamental form, and $I_S(|\Omega_0|)$ is the isoperimetric ratio of the Schwarzschild space defined by
    \begin{small}
   \begin{align*}
       &I_S(|\Omega_0|):=\\
       &\inf_{\text{$\Sigma$ homologous to $\Sigma_0$}}\set{\f{\abs{\Sigma}^{n+1}}{\abs{\Om_0}^n}:\text{the region enclosed by $\Sigma$ and $\Sigma_0$ has the same volume as $\Om_0$}}.
   \end{align*}
\end{small}
Then, up to a subsequence, $M_k$ converges to some coordinate sphere $M_\infty$. Let $\Om_\infty$ denote the domain enclosed by $M_\infty$ and the horizon; we have $\abs{\Om_\infty}=\abs{\Om_0}$. 
\end{thm}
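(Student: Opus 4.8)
The plan is to combine a geometric compactness argument with the rigidity of the isoperimetric problem in Schwarzschild space. Hypotheses (1) and (2) confine the $M_k$ to a fixed compact region and control their extrinsic geometry, which lets me extract a subsequential limit $M_\infty$ that is an embedded hypersurface bounding a domain $\Omega_\infty$ together with $\Sigma_0$. Hypothesis (3) then forces $M_\infty$ to be an isoperimetric hypersurface, and the known uniqueness theory identifies it as a coordinate sphere.

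First I would establish compactness. Since $\Om_k\Subset\Om$ with $\Om$ bounded, all the $M_k$ lie in a fixed compact subset of $N$. The bound $\abs{A}_{\pa\Om_k}\le C_2$ gives a uniform lower bound on the radius of balls over which each $M_k$ is a graph with controlled $C^{1,1}$ norm, and $\abs{\n A}_{\pa\Om_k}\le C_2$ upgrades this to uniform $C^{2,\alpha}$ graph bounds; together with $\abs{\pa\Om_k}\le C_1$ this yields a subsequence with $M_k\to M_\infty$ in $C^{1,\alpha}$ (and weakly in $C^{2,\alpha}$) as embedded hypersurfaces, while $\Om_k\to\Om_\infty$ in the finite-perimeter sense with $L^1$ convergence of characteristic functions. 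Because the convergence is graphical, the limit $M_\infty$ is again embedded and homologous to the horizon.

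Next I would pass the constraints to the limit. Volume is continuous under $L^1$ convergence of domains, so $\abs{\Om_\infty}=\lim_k\abs{\Om_k}=\abs{\Om_0}$, and hence $\Om_\infty$ is an admissible competitor in the definition of $I_S(\abs{\Om_0})$; by lower semicontinuity of area, $\abs{M_\infty}\le\liminf_k\abs{M_k}$. Feeding these into hypothesis (3) (using $\rho_k\to 0$) gives
\[
\frac{\abs{M_\infty}^{n+1}}{\abs{\Om_\infty}^n}
\;\le\;\liminf_{k\to\infty}\frac{\abs{M_k}^{n+1}}{\abs{\Om_k}^n}
\;\le\; I_S(\abs{\Om_0}),
\]
while the definition of $I_S$ as an infimum over admissible hypersurfaces forces the reverse inequality. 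Thus $M_\infty$ realizes $I_S(\abs{\Om_0})$ (and, chasing the equalities back, $\abs{M_\infty}=\lim_k\abs{M_k}$, so no area is lost). Being an isoperimetric hypersurface, $M_\infty$ has constant mean curvature and, by the curvature bounds already in hand, is smooth and embedded; by the uniqueness of CMC hypersurfaces homologous to the horizon in Schwarzschild space (cf. \cite{Brendle2013constant}), $M_\infty$ is a coordinate sphere, with radius pinned down by $\abs{\Om_\infty}=\abs{\Om_0}$. A standard elliptic bootstrap off the $C^{2,\alpha}$ bounds then upgrades the convergence $M_k\to M_\infty$ to the desired order.

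The main obstacle is the compactness step: I must rule out degeneration of the limit — loss of area to infinity (excluded by confinement to $\Om$), curvature concentration (excluded by $\abs{A},\abs{\n A}\le C_2$), and, most delicately, the emergence of multiplicity or non-embeddedness in the limit. The embeddedness of each $\pa\Om_k$ together with the uniform graphical representation is precisely what guarantees multiplicity-one, embedded convergence and the consequent continuity of the enclosed volume; without these one could not legitimately treat $\Om_\infty$ as an admissible isoperimetric competitor. This is where the minimal-surface compactness ideas referenced in the introduction enter the argument.
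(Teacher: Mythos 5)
Your proposal follows essentially the same route as the paper's proof: uniform local graphical representation coming from the bounds on $\abs{A}$ and $\abs{\n A}$ (the paper's Uniform Graph Lemma), Arzel\`a--Ascoli compactness of the graph functions, passage of the volume and area to the limit, and then hypothesis (3) forcing the limit to attain $I_S(\abs{\Om_0})$, so that the solution of the isoperimetric problem in Schwarzschild space identifies $M_\infty$ as a coordinate sphere with $\abs{\Om_\infty}=\abs{\Om_0}$. Your inequality chain, together with the remark that equality forces $\abs{M_\infty}=\lim_k\abs{M_k}$ (no area loss), is exactly the mechanism the paper uses.

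There is, however, one genuine error in your justification of the most delicate step. You claim that ``the embeddedness of each $\pa\Om_k$ together with the uniform graphical representation is precisely what guarantees multiplicity-one, embedded convergence and the consequent continuity of the enclosed volume.'' Embeddedness guarantees no such thing, and the paper does not argue this way. A sequence of embedded hypersurfaces satisfying hypotheses (1) and (2) can converge with higher multiplicity: for instance, let $\Om_k$ be the union of the region between the horizon and a coordinate sphere $\Sigma_{R_k}$ with a thin concentric shell bounded by $\Sigma_{R+1/k}$ and $\Sigma_{R+2/k}$, where $R_k\to R$ is adjusted so that $\abs{\Om_k}=\abs{\Om_0}$; each $\pa\Om_k$ is embedded, uniformly graphical, with bounded $\abs{A}$, $\abs{\n A}$ and area, yet $M_k$ converges to $\Sigma_R$ with multiplicity three. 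What excludes this is hypothesis (3) alone, since in this example the isoperimetric ratio tends to $3^{n+1}I_S(\abs{\Om_0})$, not $I_S(\abs{\Om_0})$. This is exactly how the paper proceeds: it first bounds the number of graphical sheets $s_{k,p}$ over each ball using the area bound, and only then invokes condition (3) to conclude that the multiplicity equals one. The repair in your write-up is small, because the correct mechanism is already present in your middle paragraphs: continuity of the enclosed volume needs only the BV/$L^1$ convergence of $\chi_{\Om_k}$ (which is independent of any multiplicity statement about the surfaces), and your equality chain showing no area is lost is what rules out extra sheets a posteriori. So you should delete the claim that embeddedness does this job and instead attribute multiplicity one to hypothesis (3) via the equality case, as the paper does; as written, the proof rests a key step on a false assertion.
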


The proof adapts techniques from minimal surface compactness, with control of the isoperimetric ratio eliminating limits of higher multiplicity. 

It should be remarked that the idea using isoperimetric ratio can be employed to establish a general long time existence and convergence result in an asymptotic Schwarzschild manifold. We notice that Eichmair and Metzger (\cite{EichmairInventMath2013}) proved that if $(M^n,g)$ is $C^2$ asymptotic to the Schwarzschild manifold of mass $m>0$, see the definition in Section 4, then there exists $V_0>0$ such that for all $V\geq V_0$, there is a unique isoperimetric hypersurface $\Sigma_V$ enclosed a domain with volume $V$. This isoperimetric hypersurface is strictly stable with constant mean curvature(a CMC hypersurface) and stay close to a coordinate sphere. These isoperimetric hypersurfaces form a foliation of $M\backslash \Omega_0$. The next theorem shows that the volume and area preserving mean curvature flows will converge to the isoperimetric hypersurface if the initial hypersurface is sufficiently close to it.

\begin{thm}
    Let $(N^{n+1}, \bar{g})$ be a $C^2$ asymptotically Schwarzschild space. There exists a constant $\delta > 0$ such that the following holds: if a hypersurface $M_0$ satisfies two conditions — first, the volume of the region enclosed by $M_0$ and the horizon satisfies $V \geq V_0$; second, $M_0$ is $\delta$-close to $\Sigma_V$ in the $C^2$ norm — then the volume- and area-preserving mean curvature flow exists for all time and converges to $\Sigma_V$ as $t \to \infty$.
\end{thm}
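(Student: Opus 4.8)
The plan is to run the same isoperimetric-ratio strategy that proves Theorems~\ref{thm1.1} and~\ref{thm1.1-2} in the exactly-Schwarzschild case, but now anchored at the isoperimetric hypersurface $\Sigma_V$ of Eichmair--Metzger rather than a coordinate sphere, and to rely on a local (rather than global) version of the perturbation/compactness Theorem~\ref{thm1.3}. The key structural facts we may assume are: for each $V\ge V_0$ there is a \emph{unique}, strictly stable CMC hypersurface $\Sigma_V$ bounding volume $V$, these foliate $N\setminus\Om_0$, and $\Sigma_V$ is $C^2$-close to a coordinate sphere far out. The monotonicity structure of the two flows is the same as before: along the VPMCF the enclosed volume is exactly preserved while $\abs{M_t}$ is nonincreasing, and along the APMCF the area is preserved while the enclosed volume is nondecreasing. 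In either case $\Sigma_V$ is the unique volume-$V$ isoperimetric hypersurface, so both flows, if they exist and converge, are forced toward $\Sigma_V$.

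First I would set up the flow's conserved and monotone quantities and use the strict stability of $\Sigma_V$ to linearize. Strict stability means the Jacobi operator $L=\Delta_{\Sigma_V}+(\abs{A}^2+\overline{\mathrm{Ric}}(\nu,\nu))$ has a spectral gap on the space of mean-zero variations, which yields a coercive second-variation inequality controlling the $L^2$-norm of the traceless second fundamental form $\mra$ by the deficit in the isoperimetric functional. Writing $M_t$ as a normal graph $u(\cdot,t)$ over $\Sigma_V$ for as long as the $C^2$-distance stays below $\delta$, I would convert the geometric monotonicity (area decreasing / volume increasing, with the other quantity fixed) into a differential inequality for a Lyapunov functional $\mathcal{F}(t)$ measuring the isoperimetric deficit of $M_t$ relative to $\Sigma_V$. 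The strict-stability gap makes $\mathcal{F}$ a strict Lyapunov function near $\Sigma_V$, so $\mathcal{F}(t)\to 0$, and on a suitable Sobolev/Hölder scale this forces exponential decay $\mathcal{F}(t)\le \mathcal{F}(0)e^{-ct}$.

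Next I would promote this linear-decay heuristic to genuine global existence and convergence. Short-time existence is standard; the continuation argument requires uniform curvature bounds. I would run a bootstrap: smallness of $\mathcal{F}$ (equivalently smallness of $\norm{\mra}_{L^2}$ and of $u$) together with interior Schauder/parabolic estimates for the flow equation yields uniform $C^{2,\alpha}$ (and then all higher) bounds on $u$, so $M_t$ cannot escape the $\delta$-neighborhood of $\Sigma_V$ and the flow exists for all time. Because the relevant estimates are localized near $\Sigma_V$, the asymptotic--Schwarzschild (rather than exactly Schwarzschild) ambient geometry enters only through uniformly controlled curvature terms on the far-out compact-after-translation region where $\Sigma_V$ lives; this is where the $C^2$-asymptotic hypothesis is used. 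The exponential convergence then follows by interpolating the exponential $L^2$-decay of $\mathcal{F}$ against the uniform higher-order bounds.

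The main obstacle I anticipate is \emph{uniqueness and the role of the average term}: unlike the exactly-Schwarzschild case, here we cannot appeal to the global compactness Theorem~\ref{thm1.3} with its isoperimetric-ratio input $I_S(\abs{\Om_0})$, because the asymptotically-Schwarzschild isoperimetric profile is only known to have the foliation/uniqueness structure for $V\ge V_0$ and only \emph{locally} near $\Sigma_V$. Thus the argument must be a genuinely local stability analysis driven by the spectral gap of the Jacobi operator at $\Sigma_V$, and the delicate point is to show that the nonlocal averaging terms $h(t)$ (VPMCF) and $h_0(t)$ (APMCF)—which differ from the CMC value $H_{\Sigma_V}$ only at quadratic order in $u$—do not destroy the coercivity of the linearization, i.e.\ that the mean-zero projection controls the full evolution. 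Verifying that the strict stability inequality survives the nonlocal and the lower-order curvature perturbations, uniformly in $V\ge V_0$, is the crux; once that coercivity is established, global existence and exponential convergence follow by the bootstrap described above.
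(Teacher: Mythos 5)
Your proposal is a program rather than a proof: every conclusion (global existence, convergence, and the exponential rate) is routed through a single quantitative estimate --- a differential inequality $\frac{d}{dt}\mathcal{F}(t)\le -c\,\mathcal{F}(t)$ for the isoperimetric deficit, with $c$ coming from the spectral gap of the Jacobi operator and claimed to survive the nonlocal terms $h(t)$ and $h_0(t)$ --- and you explicitly leave that estimate unverified (your ``crux''). This is a genuine gap, not a technicality. To derive such an inequality you must write $M_t$ as a graph $u$ over $\Sigma_V$, expand both the deficit and the dissipation $\int_{M_t}(H-h)^2d\mu_t$ (respectively $\int_{M_t}(h_0-H)d\mu_t$) to second order in $u$, check that on mean-zero functions the nonlocal term drops out of the quadratic form so that strict stability indeed gives coercivity, and control the nonlinear remainders --- all while only knowing $C^2$-closeness, which in your scheme is itself maintained by the very decay estimate being proven. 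Breaking this circularity requires either the analytic-semigroup/center-manifold machinery or a carefully quantified continuity argument; neither is supplied, so as written neither existence nor convergence is actually established.

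The paper avoids this difficulty by decoupling the steps so that the spectral analysis is needed only at the very end, and --- contrary to your claim that the isoperimetric-ratio compactness tool is unavailable in the asymptotically Schwarzschild setting --- it runs exactly that tool localized at $\Sigma_V$: since Eichmair--Metzger give uniqueness of the isoperimetric hypersurface for each $V\ge V_0$, the analogue of Theorem \ref{thm1.3} holds with $I_{\Sigma_V}$ in place of $I_S(\abs{\Om_0})$ (Corollary \ref{cor4.2}), so bounded $\abs{A}$, $\abs{\n A}$ together with $I_M\le I_{\Sigma_V}+\rho$ force $C^2$-closeness to $\Sigma_V$. This yields long-time existence by the same iteration as in Section 3 (with curvature estimates from \cite{EichmairLarge2013}); sub-convergence comes from the soft monotonicity argument (the speed tends to zero, via elliptic estimates); full convergence follows from uniqueness of CMC hypersurfaces in a $\delta_1$-neighborhood of the strictly stable $\Sigma_V$; and only the exponential rate uses the linearization, via Theorem 9.2.2 of \cite{Analytic1995}, invoked after convergence is already known, when the graph function over $\Sigma_\infty$ is as small as one likes. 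Your observation that the nonlocal terms respect the mean-zero projection (so strict stability gives negativity of the linearized operator) is correct and is indeed the mechanism behind that last step; but in the paper's ordering it carries only the rate, whereas in yours it must carry the entire theorem, and that is the part you have not done.
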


\begin{rem}
    It should be remarked that the existence result of CMC hypersurfaces holds true even in asymptotically flat spaces under the assumption that the initial hypersurface has a controlled second fundamental form together with its derivatives and the isoperimetric ratio. This can be archived by the usual minimizing method and the controlled isoperimetric ratio eliminate the high multiplicity case. This gives the existence of CMC hypersurface in asymptotic flat space.
\end{rem}
\vspace{.1in}

Finally, we briefly outline the sketch of this paper. Section 2 covers some necessary preliminaries and notations. The following section is devoted to giving unified proofs of the main theorems. The last section is to generalize the main theorems in asymptotically Schwarzschild spaces.

\begin{ack}
We want to express our sincere thanks to Prof. Jiayu Li for constant encouragement and guidance. The first author would like to thank Prof. Gang Tian for helpful discussion. Special thanks also give to Prof. Ze Li for bringing us detailed explanation of the center manifold analysis. The first author is partially supported by National Key RD Program of China 2020YFA0712800 and Initial Funding of No.X2450216. The second author is supported by Initial Scientific Research Fund of Young Teachers in Hefei University of Technology of No.JZ2024HGQA0122. The third author is supported by NSFC No. 12531002, No. 12271039 and No. 12071352.
\end{ack}
\vspace{.2in}

\section{Preliminaries}
We adopt the notations from Huisken-Yau in \cite{huisken1996definition} and its $(n+1)$-dimensional extensions in \cite{Miglioranza2020TheVP}.
Let $({\mathbb R}^{n+1}, \delta)$ be the $(n+1)$-dimensional Euclidean space with the flat metric $\delta$ and Euclidean coordinates $\{y_{\alpha}\}$, $\alpha=1,\cdots,n+1$. Greek indices range from 1 to $(n+1)$, Latin indices range from 1 to $n$ and we write 
$$
r^2=\sum_{\alpha=1}^{n+1}(y_{\alpha})^2
$$ 
for the Euclidean distance.
We consider the half Schwarzschild space $(N, \bar{g})$, which is conformal to $(\mathbb{R}^{n+1}\setminus B_1, \delta)$ with $\bar{g}=\phi^{\frac{4}{n-1}}\delta$, where $\phi=1+\frac{m}{2r^{n-1}}$ and $m>0$. 
Let $\bar{\n},\bar{R}m$ and $\bar{R}_{\alpha\beta}$ denote the covariant derivative, Riemann and Ricci curvatures 
with respect to $\bar{g}$.

Suppose $\Sigma$ is an $n$-dimensional hypersurface in $(N, \bar{g})$ with induced metric $g=\{g_{ij}\}$ on $\Sigma$.
Denote by $\nu,\n$ the unit outward normal and the covariant derivative on $\Sigma$, respectively. Let $\kappa_i$ be the principal curvatures of $\Sigma$ with corresponding unit principal vectors $f_i$.
In the orthonormal frame $f_1,\cdots, f_{n+1}=\nu$ on $N$, the second fundamental form $A=\{h_{ij}\}$ is given by
\[ h_{ij}=\<\bar{\n}_{f_i}\nu, f_j\>=\kappa_i\delta_{ij}. \]
We write the mean curvature $H$ of $\Sigma$ and the square of the norm of the second fundamental form by 
\[ H=g^{ij}h_{ij}=\sum_i\kappa_i, \quad \abs{A}^2=g^{ik}g^{jl}h_{ij}h_{kl}=\sum_i\kappa_i^2. \]
We will adopt $\mathring{A}$ to denote the frequently used geometric quantity, the traceless second fundamental form, i.e.
\[\mathring{h}_{ij}=h_{ij}-\f1{n} Hg_{ij},\]
The corresponding norm of $\mathring{A}$ is
\[ \mathring{\abs{A}}^2=g^{ik}g^{jl}\mathring{h}_{ij}\mathring{h}_{kl}=\abs{A}^2-\f1{n} H^2=\f1{n}\sum_{i\neq j}(\kappa_i-\kappa_j)^2. \]

We next calculate the evolution equations for VPMCF which also appeared in \cite{huisken1996definition}.

\begin{lem}(Lemma 3.6, 3.7, 3.8 in \cite{huisken1996definition} in dimension 3 and Lemma 9 in \cite{li2009volume})\label{evoeqns}
Along the VPMCF \eqref{e-VPMCF}, we have the evolution equations
    \begin{align*}
        \pfrac{} {t}g_{ij}=&2(h-H)h_{ij},\\
        \pfrac{} {t}d\mu_t=&H(h-H)dV_t,\\
        \pfrac{} {t}H=& \Delta H+(H-h) (|A|^2+\bar{R}ic(\nu, \nu)), \\
        \pfrac{}{t}h_{ij}=&\Delta h_{ij}-2Hh_{il}h_{lj}+hh_{il}h_{lj}+|A|^{2}h_{ij}-h\bar{R}_{\nu i\nu j}+h_{ij}\bar{R}_{\nu l \nu l}\\
        &-h_{jl}\bar{R}_{lmim}-h_{il}\bar{R}_{lmjm}+2h_{lm}\bar{R}_{limj}-\bar{\n}_{j}\bar{R}_{\nu lil}-\bar{\n}_{l}\bar{R}_{\nu ijl},\\
        \pfrac{} {t}|A|^2=&\Delta\abs{A}^2-2\abs{\n A}^2+2\abs{A}^4
    -2h\tr{A^3}+2\abs{A}^2\bar{R}ic(\nu,\nu)\\ 
    & -2hh_{ij}\bar{R}_{\nu i\nu j}-4(h_{ij}h_{jl}\bar{R}_{lmim}-h_{ij}h_{lm}\bar{R}_{limj})\\ 
    & -2h_{ij}(\bar{\n}_j\bar{R}_{\nu lil}+\bar{\n}_l\bar{R}_{\nu ijl}),
    \end{align*}  
where $d\mu_t$ is the area form of $M_t$.
\end{lem}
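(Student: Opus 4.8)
The plan is to derive all five identities from the general variation formulas for a hypersurface $M_t$ moving with normal speed $f$, that is $\pfrac{F}{t}=f\nu$, and to specialize to $f=h-H$ only at the end. The single structural fact that distinguishes the VPMCF is that $h=h(t)$ is \emph{constant on each slice} $M_t$; hence $\n h=0$, so that $\n f=-\n H$, $\n_i\n_j f=-\n_i\n_j H$, and $\Delta f=-\Delta H$. These substitutions are what turn the generic flow formulas into the stated ones.

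First I would record the first-order variations. Differentiating $g_{ij}=\<\pa_iF,\pa_jF\>$ and using $\pa_i(f\nu)=(\pa_if)\nu+f\,\pa_i\nu$ together with $\<\nu,\pa_jF\>=0$ gives $\pfrac{}{t}g_{ij}=2fh_{ij}=2(h-H)h_{ij}$; contracting with $\half g^{ij}$ yields $\pfrac{}{t}d\mu_t=fH\,d\mu_t=H(h-H)\,d\mu_t$, while the inverse metric obeys $\pfrac{}{t}g^{ij}=-2fh^{ij}$ and the normal evolves by $\pfrac{}{t}\nu=-\n f=\n H$. Differentiating $h_{ij}=-\<\nu,\bar{\n}_{\pa_i}\pa_jF\>$ and commuting the time derivative past the spatial derivatives produces the generic formula
\begin{equation*}
\pfrac{}{t}h_{ij}=-\n_i\n_j f+fh_{il}h_{lj}-f\bar{R}_{\nu i\nu j}.
\end{equation*}
Tracing this with $g^{ij}$ and inserting $\pfrac{}{t}g^{ij}=-2fh^{ij}$ gives $\pfrac{}{t}H=-2f|A|^2-\Delta f+f|A|^2-f\bar{R}ic(\nu,\nu)=-\Delta f-f|A|^2-f\bar{R}ic(\nu,\nu)$, which upon substituting $f=h-H$ becomes $\Delta H+(H-h)(|A|^2+\bar{R}ic(\nu,\nu))$, the third identity; note this route avoids ever expanding the ambient curvature terms.

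To reach the stated form of $\pfrac{}{t}h_{ij}$ I would put $f=h-H$ in the generic formula, so that $-\n_i\n_j f=+\n_i\n_j H$, and then apply the Simons-type identity for a hypersurface of a general ambient manifold, which rewrites $\n_i\n_j H$ as $\Delta h_{ij}$ plus the reaction terms $-Hh_{il}h_{lj}+|A|^2h_{ij}$ and the full collection of ambient curvature and curvature-derivative terms $-H\bar{R}_{\nu i\nu j}+h_{ij}\bar{R}_{\nu l\nu l}-h_{jl}\bar{R}_{lmim}-h_{il}\bar{R}_{lmjm}+2h_{lm}\bar{R}_{limj}-\bar{\n}_j\bar{R}_{\nu lil}-\bar{\n}_l\bar{R}_{\nu ijl}$; the two derivative terms appear because $\bar{R}m$ is not parallel, via the Codazzi equations. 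Combining $-Hh_{il}h_{lj}$ from Simons with the $(h-H)h_{il}h_{lj}$ from the variation gives $-2Hh_{il}h_{lj}+hh_{il}h_{lj}$, and combining $-H\bar{R}_{\nu i\nu j}$ with $-(h-H)\bar{R}_{\nu i\nu j}$ gives $-h\bar{R}_{\nu i\nu j}$, which reproduces the fourth identity. Finally, for $|A|^2=g^{ik}g^{jl}h_{ij}h_{kl}$ I would differentiate, the metric factors contributing an extra term $-4f\,\tr A^3$ via $\pfrac{}{t}g^{ik}=-2fh^{ik}$, so that $\pfrac{}{t}|A|^2=2h^{ij}\pfrac{}{t}h_{ij}-4f\,\tr A^3$; substituting the fourth identity and using $2h^{ij}\Delta h_{ij}=\Delta|A|^2-2|\n A|^2$ produces the parabolic principal part, and the algebraic terms (after $f=h-H$ collapses the cubic contributions to $-2h\,\tr A^3$) assemble into $2|A|^4-2h\,\tr A^3+2|A|^2\bar{R}ic(\nu,\nu)$ together with the listed curvature terms, giving the fifth identity.

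The step I expect to be most delicate is the curvature bookkeeping in the Simons identity over a general, non-parallel-curvature ambient space: correctly commuting $\n_i\n_j$ through the Gauss and Codazzi equations, tracking every index contraction of $\bar{R}_{\nu i\nu j}$, $\bar{R}_{limj}$ and of the covariant derivatives $\bar{\n}_j\bar{R}_{\nu lil}$, and fixing all signs against the conventions that $\nu$ is the \emph{outward} normal and $h_{ij}=\kappa_i\delta_{ij}>0$ on coordinate spheres. Once these contraction and sign conventions are pinned down the remaining manipulations are routine, and the results agree with Lemmas 3.6--3.8 of \cite{huisken1996definition} and Lemma 9 of \cite{li2009volume}.
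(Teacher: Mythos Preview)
Your derivation is correct and is precisely the standard route: first variation formulas for $g_{ij}$, $d\mu_t$, $\nu$, $h_{ij}$ under a general normal speed $f$, followed by the Simons identity in a Riemannian ambient space to convert $\n_i\n_j H$ into $\Delta h_{ij}$ plus reaction and ambient-curvature terms, and finally the specialization $f=h-H$ with $\n h=0$. The paper itself does not supply a proof of this lemma; it merely quotes the result and cites Lemmas~3.6--3.8 of \cite{huisken1996definition} and Lemma~9 of \cite{li2009volume}, where exactly the computation you outline is carried out, so your approach coincides with the one the paper defers to.
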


As a corollary, we have the variational formula for the volume and the area along the VPMCF.

\begin{cor} Along the VPMCF, we have
\begin{equation}\label{e-VPMCF-vol}
        \frac{d}{dt}{\rm Vol}(\Omega_t)=0,
\end{equation}
and
\begin{equation}\label{e-VPMCF-area}
        \frac{d}{dt}{\rm Area}(M_t)=-\int_{M_t}(h-H)^2d\mu_t.
\end{equation}
\end{cor}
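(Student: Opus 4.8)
The plan is to derive both identities directly from the first variation formulas, using the evolution equation for the area element supplied by Lemma~\ref{evoeqns} together with the purely algebraic fact that $\int_{M_t}(h-H)\,d\mu_t=0$, which is precisely the defining property of the average $h$ and is the source of the volume-preserving behavior.

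For the volume identity, I would begin from the first variation of the enclosed volume. Since $\Om_t$ is bounded by the moving hypersurface $M_t$ and the \emph{fixed} horizon $\Sgm_0$, only the boundary piece $M_t$ contributes to the time derivative, so the standard formula gives
\begin{equation*}
\frac{d}{dt}{\rm Vol}(\Om_t)=\int_{M_t}\left\langle \frac{\pa F}{\pa t},\nu\right\rangle d\mu_t=\int_{M_t}(h-H)\,d\mu_t,
\end{equation*}
where I used the flow equation $\pa_t F=(h-H)\nu$. Then, by the very definition $h=\left(\int_{M_t}H\,d\mu_t\right)\big/\left(\int_{M_t}d\mu_t\right)$, one has $\int_{M_t}(h-H)\,d\mu_t = h\,{\rm Area}(M_t)-\int_{M_t}H\,d\mu_t=0$, which yields \eqref{e-VPMCF-vol}.

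For the area identity, I would integrate the evolution equation $\pa_t\,d\mu_t=H(h-H)\,d\mu_t$ from Lemma~\ref{evoeqns} to obtain
\begin{equation*}
\frac{d}{dt}{\rm Area}(M_t)=\int_{M_t}H(h-H)\,d\mu_t.
\end{equation*}
To bring this to the claimed form I would exploit the vanishing identity once more: since $\int_{M_t}h(h-H)\,d\mu_t=h\int_{M_t}(h-H)\,d\mu_t=0$, I may subtract this term and replace $H$ by $H-h=-(h-H)$ inside the integral without changing its value, giving $\int_{M_t}H(h-H)\,d\mu_t=\int_{M_t}(H-h)(h-H)\,d\mu_t=-\int_{M_t}(h-H)^2\,d\mu_t$, which is exactly \eqref{e-VPMCF-area}.

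There is no genuine obstacle here: the computation is routine once the evolution equations of Lemma~\ref{evoeqns} are available. The only point requiring care is the vanishing of $\int_{M_t}(h-H)\,d\mu_t$, which is the algebraic heart of both identities; it is also worth stating explicitly that the fixed horizon $\Sgm_0$ contributes nothing to the first variation of the enclosed volume, so that the boundary integral is taken over $M_t$ alone.
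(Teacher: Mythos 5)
Your proof is correct and follows exactly the route the paper intends: the corollary is drawn from the evolution equation $\partial_t\,d\mu_t=H(h-H)\,d\mu_t$ of Lemma~\ref{evoeqns} together with the first variation of enclosed volume and the defining identity $\int_{M_t}(h-H)\,d\mu_t=0$. Nothing is missing, and your explicit remark that the fixed horizon contributes nothing to the volume variation is a useful clarification the paper leaves implicit.
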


For the APMCF, we have

\begin{lem}
Along the APMCF \eqref{e-APMCF}, we have the evolution equations
    \begin{align*}
        \pfrac{} {t}g_{ij}=&2(h_0-H)h_{ij},\\
        \pfrac{} {t}d\mu_t=&H(h_0-H)d\mu_t,\\
        \pfrac{} {t}H =&\Delta H+(H-h_0) (|A|^2+\bar{R}ic(\nu, \nu)), \\
        \pfrac{}{t}h_{ij}=&\Delta h_{ij}-2Hh_{il}h_{lj}+h_0h_{il}h_{lj}+|A|^{2}h_{ij}-h_0\bar{R}_{\nu i\nu j}+h_{ij}\bar{R}_{\nu l\nu l}\\
        &-h_{jl}\bar{R}_{lmim}-h_{il}\bar{R}_{lmjm}+2h_{lm}\bar{R}_{limj}-\bar{\n}_{j}\bar{R}_{\nu lil}-\bar{\n}_{l}\bar{R}_{\nu ijl},\\
        \pfrac{} {t}|A|^2=&\Delta\abs{A}^2-2\abs{\n A}^2+2\abs{A}^4
    -2h_0\tr{A^3}+2\abs{A}^2\bar{Ric}(\nu,\nu)\\ 
    & -2h_0h_{ij}\bar{R}_{\nu i\nu  j}-4(h_{ij}h_{jl}\bar{R}_{lmim}-h_{ij}h_{lm}\bar{R}_{limj})\\ 
    & -2h_{ij}(\bar{\n}_j\bar{R}_{\nu lil}+\bar{\n}_l\bar{R}_{\nu ijl}).
    \end{align*} 
\end{lem}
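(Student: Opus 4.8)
The plan is to exploit the structural identity between the two flows. Both the VPMCF \eqref{e-VPMCF} and the APMCF \eqref{e-APMCF} are instances of a single family of normal flows $\partial_t F = f\nu$, differing only in the choice of normal speed: $f = h - H$ for the VPMCF and $f = h_0 - H$ for the APMCF. The evolution equations for the VPMCF are already recorded in Lemma \ref{evoeqns}, so the entire content of the present lemma is to check that the identical derivation goes through with $h$ replaced by $h_0$.

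First I would recall the general evolution equations for an arbitrary normal flow $\partial_t F = f\nu$, which depend on $f$ only through $f$ itself and its covariant derivatives $\n f$ and $\n^2 f$. Writing these out for the induced metric, the area form, the unit normal, the second fundamental form, and $\abs{A}^2$, one sees that the decisive property used in specializing the abstract speed $f$ to the VPMCF is that $h(t)$ is constant along each time slice, so that $\n h = 0$ and $\Delta h = 0$; consequently $\n f = -\n H$, $\Delta f = -\Delta H$, and the constant $h$ survives only in the algebraic (undifferentiated) terms.

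The crux is therefore the elementary observation that $h_0(t) = \frac{\int_{M_t} H^2 d\mu_t}{\int_{M_t} H d\mu_t}$ is likewise a function of $t$ alone, hence spatially constant, so that $\n h_0 = 0$ and $\Delta h_0 = 0$. I would then substitute $f = h_0 - H$ into the general formulas exactly as before: the differentiated contributions reproduce the same $\Delta H$, $\Delta h_{ij}$, $\Delta\abs{A}^2$ and $\abs{\n A}^2$ terms, while the undifferentiated occurrences of the speed-constant now carry $h_0$ rather than $h$. Concretely, in $\partial_t h_{ij}$ the only terms altered are $h\, h_{il}h_{lj} \mapsto h_0\, h_{il}h_{lj}$ and $-h\,\bar{R}_{\nu i\nu j} \mapsto -h_0\,\bar{R}_{\nu i\nu j}$, which arise from the additive constant in $f$ multiplying the algebraic factor $h_{ik}h_{kj} - \bar{R}_{\nu i\nu j}$; in $\partial_t \abs{A}^2$ only $-2h\,\tr{A^3} \mapsto -2h_0\,\tr{A^3}$ and $-2h\,h_{ij}\bar{R}_{\nu i\nu j} \mapsto -2h_0\,h_{ij}\bar{R}_{\nu i\nu j}$ change; and tracking the signs as in the VPMCF case reproduces $\partial_t H = \Delta H + (H - h_0)(\abs{A}^2 + \bar{R}ic(\nu,\nu))$ together with the evolution of $d\mu_t$ and $g_{ij}$.

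Since every manipulation is purely algebraic once the differentiated terms are isolated, there is no genuine analytic obstacle here: the only point requiring (trivial) bookkeeping is confirming that $h_0$ appears exactly where $h$ did and nowhere else, which holds because both enter their respective flows solely as an additive spatial constant in the normal speed. This yields the stated system of evolution equations for the APMCF.
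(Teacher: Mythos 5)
Your proposal is correct and matches the paper's (implicit) reasoning: the paper states this lemma without proof, treating it as the verbatim analogue of Lemma \ref{evoeqns} for the VPMCF, and the justification is exactly your observation that $h_0(t)$, like $h(t)$, is spatially constant, so in the general normal-flow computation with speed $f=h_0-H$ all differentiated terms reproduce those of the VPMCF case while the additive constant $h_0$ simply replaces $h$ in the algebraic terms.
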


As a corollary, we have the variational formula for the volume and the area for the APMCF.

\begin{cor} Along the APMCF, we have
\begin{equation}\label{e-APMCF-vol}
        \frac{d}{dt}{\rm Vol}(\Omega_t)=\int_{M_t}(h_0-H)d\mu_t,
\end{equation}
and
\begin{equation}\label{e-APMCF-area}
        \frac{d}{dt}{\rm Area}(M_t)=0.
\end{equation}
\end{cor}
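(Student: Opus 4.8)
The plan is to derive both identities directly from the evolution equations in the preceding lemma together with the definition of $h_0$; these are integral computations requiring no regularity or compactness input, and the only nontrivial content is the algebraic cancellation forced by the particular choice of $h_0$.

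For the volume, I would invoke the general first variation of enclosed volume: for a family $M_t$ moving with normal speed $f$, i.e. $\pfrac{F}{t}=f\nu$, one has $\f{d}{dt}{\rm Vol}(\Om_t)=\int_{M_t}f\,d\mu_t$, the flux of the velocity field through the moving boundary. Since $\Om_t$ is bounded by $M_t$ together with the fixed horizon $\Sgm_0$, only the moving component $M_t$ contributes. Substituting $f=h_0-H$ then yields \eqref{e-APMCF-vol} immediately.

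For the area, I would integrate the evolution of the area form, $\pfrac{}{t}d\mu_t=H(h_0-H)\,d\mu_t$, over $M_t$. Since $h_0=h_0(t)$ depends only on time, it factors out of the spatial integral, giving
\begin{equation*}
\f{d}{dt}{\rm Area}(M_t)=\int_{M_t}H(h_0-H)\,d\mu_t=h_0\int_{M_t}H\,d\mu_t-\int_{M_t}H^2\,d\mu_t.
\end{equation*}
The definition of $h_0$ is chosen precisely so that $h_0\int_{M_t}H\,d\mu_t=\int_{M_t}H^2\,d\mu_t$; the two terms therefore cancel, yielding \eqref{e-APMCF-area}. This is exactly the algebraic identity that gives the flow its name.

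There is no genuine obstacle here. The only points demanding care are the sign and orientation in the volume variation — with $\nu$ the outward unit normal, outward motion increases the enclosed volume, which fixes the sign in \eqref{e-APMCF-vol} — and confirming that the denominator $\int_{M_t}H\,d\mu_t$ is nonzero, so that $h_0$ is well defined along the flow; the latter holds since the flow stays near a coordinate sphere and hence mean convex.
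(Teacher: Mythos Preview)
Your proposal is correct and follows exactly the approach the paper intends: the corollary is stated without proof precisely because both identities are immediate from integrating the evolution equation $\pfrac{}{t}d\mu_t=H(h_0-H)d\mu_t$ and from the standard first variation of enclosed volume, together with the defining property of $h_0$. There is nothing to add.
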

By the definition of $h_0$, the Hölder inequality and (\ref{e-APMCF-vol}), we see that
\begin{equation}\label{e-APMCF-vol-2}
        \frac{d}{dt}{\rm Vol}(\Omega_t)=\int_{M_t}(h_0-H)d\mu_t=\frac{\int_{M_t}H^2d\mu_t}{\int_{M_t}Hd\mu_t}|M_t|-\int_{M_t}Hd\mu_t\geq 0,
\end{equation}
provided that $\int_{M_t}Hd\mu_t>0$.

\vspace{.2in}

\section{Proof of the Main Theorems}

In this section, we provide  detailed proofs of the main theorems. The proof consists of two steps: the long-time existence and the convergence. We  first prove the corresponding results for the volume-preserving mean curvature flow.

\subsection{The long time existence of VPMCF}

Due to the parabolic nature, we may assume the flow exists for a short time \cite{alma991007096229706271}. By combining  curvature estimates with the Bonnet-Myers theorem, we  show that the flow remains bounded and its geometry can be controlled at the maximal time. Hence, the flow can be extended for a short time, contradicting the maximality of the time.  


 Let $\Om_0$ denote the domain enclosed by the surface $M_0$ and the horizon $\Sigma_0$ in the Schwarzschild space $(N, \bar{g})$. Given $\delta>0$ such that $M_0$ is $\delta$ close to the coordinate sphere $\Sigma(r_0)$ in $C^2$ norm.
The principal curvatures of the coordinate sphere $\Sgm(r)$ with $r=r_0$ can be explicitly calculated as
\[
\bar{\lambda}_i=\phi^{-\frac{n+1}{n-1}}r_0^{-1}\left(1-\frac{m}{2r_0^{n-1}}\right), 
\]
for all $i=1, 2,\cdots, n.$
By scaling, we may assume the principal curvatures of $\Sigma(r_0)$ are equal to 1. Since $M_0$ is $\delta$-close to $\Sgm(r_0)$, by modifying $\delta$ if necessary, we may assume the principal curvatures $\kappa_i, (i=1,2,\cdots, n)$ of $M_0$ satisfy
\[
\f12\leq \kappa_i(x,0)\leq2, \quad \forall i=1,\cdots,n,\quad\forall x\in M_0.
\]
Then the initial surface $M_0$ satisfies the following estimates
\[
     \abs{H}(x,0)=\sum_{i=1}^n\kappa_i\leq 2n,\quad \abs{A}(x,0)=\sqrt{\sum_{i=1}^n\kappa_i^2}\leq 2\sqrt{n}.
\]
Set $C'=5n$.
 We choose $\rho_0=\rho_0(n, \delta, C')$ such that 
\begin{equation}\label{iso}
\f{\abs{M_0}^{n+1}}{\abs{\Om_0}^n}\leq I_S(|\Omega_0|)+\rho_0,
\end{equation}
where $I_S(|\Omega_0|)$ is the isoperimetric ratio of the Schwarzschild space as defined in Theorem \ref{thm1.3}.

Condition \eqref{iso} implies  $\abs{M_0}\leq[(I_S(|\Omega_0|)+\rho_0)\abs{\Om_0}^n]^{\f1{n+1}}=\tilde{C}$.
Since the flow preserves volume and decreases area, by denoting $I(t)=\f{\abs{M_t}^{n+1}}{\abs{\Om_t}^n}$ as the isoperimetric ratio, we have the following

\begin{prop}\label{isovp}
    Along the flow \eqref{e-VPMCF}, $I_S(|\Omega_0|)\leq I(t)\leq I_S(|\Omega_0|)+\rho_0$.
\end{prop}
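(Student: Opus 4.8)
The plan is to combine the two conservation/monotonicity properties of the flow recorded in \eqref{e-VPMCF-vol} and \eqref{e-VPMCF-area} with the variational characterization of $I_S(\abs{\Om_0})$. Since the VPMCF preserves the enclosed volume, \eqref{e-VPMCF-vol} gives $\abs{\Om_t}=\abs{\Om_0}$ for every $t$ in the interval of existence, so the denominator of $I(t)$ is frozen at $\abs{\Om_0}^n$. Since the area is nonincreasing, \eqref{e-VPMCF-area} gives $\abs{M_t}\leq\abs{M_0}$ along the flow.

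For the upper bound I would simply estimate
\[
I(t)=\frac{\abs{M_t}^{n+1}}{\abs{\Om_0}^n}\leq\frac{\abs{M_0}^{n+1}}{\abs{\Om_0}^n}=I(0),
\]
and then invoke the initial hypothesis \eqref{iso}, which states precisely that $I(0)\leq I_S(\abs{\Om_0})+\rho_0$. This chain yields $I(t)\leq I_S(\abs{\Om_0})+\rho_0$ immediately.

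For the lower bound, the key observation is that $M_t$ is an admissible competitor in the infimum defining $I_S(\abs{\Om_0})$. Indeed, $M_t$ is homologous to the horizon $\Sgm_0$, since the homology class is preserved under the smooth flow as long as $M_t$ remains a smooth embedded boundary component, and by volume preservation the region it bounds together with $\Sgm_0$ has volume $\abs{\Om_t}=\abs{\Om_0}$. Hence $M_t$ satisfies the volume constraint in the definition of $I_S$, so that $\abs{M_t}^{n+1}/\abs{\Om_0}^n\geq I_S(\abs{\Om_0})$; using $\abs{\Om_t}=\abs{\Om_0}$ once more, this is exactly $I(t)\geq I_S(\abs{\Om_0})$.

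The computations here are immediate; the only point requiring care---the main (and quite mild) obstacle---is justifying that $M_t$ genuinely qualifies as a competitor for $I_S$ throughout the flow. This amounts to checking that the flow preserves the homology class and embeddedness of $M_t$, so that it still bounds a well-defined region together with $\Sgm_0$; this is where one relies on the short-time smoothness and on $\Om_t$ being a genuine enclosed domain. Granting this, both inequalities follow as above and the proposition is proved.
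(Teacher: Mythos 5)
Your proof is correct and follows essentially the same route as the paper: the lower bound is exactly the isoperimetric inequality in Schwarzschild space (i.e., $M_t$ is an admissible competitor for $I_S(\abs{\Om_0})$ because volume is preserved), and the upper bound combines volume preservation, the area monotonicity \eqref{e-VPMCF-area}, and the initial condition \eqref{iso}. You have simply spelled out the details---including the admissibility of $M_t$ as a competitor---that the paper's two-line proof leaves implicit.
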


\begin{proof}
    The first inequality is precisely the isoperimetric inequality in the Schwarzschild space. The second inequality follows directly from the volume-preserving property of the flow. 
\end{proof}

A straightforward consequence of Proposition \ref{isovp} is that the area of the hypersurface is bounded from below, 
\[
\abs{M_t}=I(t)^{\f1{n+1}}\abs{\Om_t}^{\f n{n+1}}\geq(I_S(|\Omega_0|)\abs{\Om_0}^n)^{\f1{n+1}}:=M_*.
\]
We now define the maximal time set as
\[
\mathcal{S}=\{\tau\in[0,T):\f14\leq\kappa_i\leq4, \quad\forall i=1, \cdots, n, \forall x\in M_t, \forall t\in[0,\tau)\}.
\]
Then, for all $ t\in\mathcal{S}$, there holds
\begin{enumerate}
    \item $\abs{H}(x,t)=\sum\kappa_i(x,t)\leq 4n\leq C'$;
    \item $\abs{A}(x,t)=\sqrt{\sum\kappa_i^2}\leq 4\sqrt{n}\leq C'$;
    \item $h(t)=\frac{\int_{M_t}Hd\mu_t}{\int_{M_t}d\mu_t}\leq C'$.
\end{enumerate}

Let $S'=\sup\mathcal{S}$ and assume $S'<\infty$. 
We claim that the maximal time is bounded from below by a positive dimension-dependent constant, which implies that the flow exists for some fixed short time interval. 

\begin{thm}
    There exists some positive constant $C=C_n$ such that $S'>C_n$.
\end{thm}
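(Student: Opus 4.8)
The plan is to turn the bounds available on $\mathcal{S}$ into differential inequalities for the largest and smallest principal curvatures and then integrate them, showing that the curvatures cannot leave the interval $[\f14,4]$ before a definite, data-dependent time. Writing the evolution equation for the second fundamental form from Lemma \ref{evoeqns} schematically as
\[
\pfrac{}{t}h_{ij}=\Delta h_{ij}+Q_{ij},
\]
where $Q_{ij}$ collects all the zeroth-order reaction terms (the cubic terms $-2Hh_{il}h_{lj}+hh_{il}h_{lj}+\abs{A}^2h_{ij}$ together with the ambient-curvature and ambient-curvature-derivative terms), I would first establish a uniform bound $\abs{Q_{ij}}\leq C_2$ on $\mathcal{S}$, with $C_2=C_2(n,m,r_0,C')$ independent of $t$.

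For the reaction estimate, properties (1)--(3) give $\abs{H},\abs{A},h\leq C'$ on $\mathcal{S}$, so the cubic terms are controlled by a constant depending only on $n$ and $C'$. For the terms involving $\bar{R}m$ and $\bar{\n}\bar{R}m$, I would use that on the half Schwarzschild space the ambient Riemann curvature and its covariant derivative are \emph{globally} bounded: they decay like $mr^{-n}$ at infinity and remain finite down to the horizon (where $\phi=2$), so that $\abs{\bar{R}m}_{\bar{g}}+\abs{\bar{\n}\bar{R}m}_{\bar{g}}\leq C_1(n,m)$ everywhere. This global boundedness avoids any circularity about the region occupied by the flow. Combining the two contributions yields $\abs{Q_{ij}}\leq C_2$, the rescaling that normalizes the principal curvatures of $\Sigma_{r_0}$ to $1$ only altering the constants by fixed factors that I absorb into $C_2$.

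Next I would apply Hamilton's maximum principle for symmetric tensors to this reaction-diffusion equation. Setting $\kappa_{\max}(t)=\max_{x}\max_i\kappa_i(x,t)$ and $\kappa_{\min}(t)=\min_x\min_i\kappa_i(x,t)$, the diffusion term contributes nonpositively (resp. nonnegatively) at a spatial maximum (resp. minimum) of the extremal eigenvalue, so in the barrier sense
\[
\frac{d}{dt}\kappa_{\max}(t)\leq C_2,\qquad \frac{d}{dt}\kappa_{\min}(t)\geq -C_2
\]
for a.e. $t\in[0,S')$. Since $\kappa_i(\cdot,0)\in[\f12,2]$, integration gives $\kappa_{\min}(t)\geq \f12-C_2t$ and $\kappa_{\max}(t)\leq 2+C_2t$ on $[0,S')$. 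Choosing $C_n=\f1{8C_2}$, if $S'\leq C_n$ then $\kappa_i(x,t)\in[\f38,\f{17}{8}]\subset(\f14,4)$ on all of $[0,S']$; by continuity of the flow, and short-time existence past $S'$, the curvatures remain strictly inside $[\f14,4]$ slightly beyond $S'$, contradicting $S'=\sup\mathcal{S}$. Hence $S'>C_n$.

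The main obstacle is the rigorous use of the maximum principle for the eigenvalues of the Weingarten map: the evolution is stated for the tensor $h_{ij}$ against a \emph{time-dependent} metric $g_{ij}$, and the eigenvalues are only Lipschitz where they collide, so Hamilton's tensor maximum principle (with the reaction term evaluated on the extremal eigendirection) is the correct tool rather than a naive scalar argument. A secondary point is checking that the flow genuinely remains in the regime where the estimates apply: on $\mathcal{S}$ the normal speed $\abs{h-H}\leq 2C'$ is bounded, so over a time interval of length $\lesssim C_n$ the surface moves only a bounded distance and in particular stays homologous to the horizon and bounded away from it, keeping all the above bounds valid.
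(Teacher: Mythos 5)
Your proposal is correct and follows the same overall strategy as the paper's proof: on $[0,S')$ the defining bounds of $\mathcal{S}$ make every reaction term in the evolution of the second fundamental form bounded by a fixed constant, a maximum principle applied to the extremal principal curvatures then controls their rate of change, and integration yields a definite time before the curvatures can leave $[\frac14,4]$ starting from $[\frac12,2]$. The only real difference is the tool used to handle the non-smoothness of $\kappa_{\max}$ and $\kappa_{\min}$: the paper follows Miglioranza and replaces them by the smooth approximations built from $u_2(x_1,x_2)=\frac{x_1+x_2}{2}+\sqrt{\left(\frac{x_1-x_2}{2}\right)^2+\beta^2}$, applies the scalar parabolic maximum principle to $\partial_t u\leq\Delta u+(2C'^2+5\Lambda)u+2C'\Lambda$ (a reaction linear in $u$, hence an exponential-in-time bound), and lets $\beta\to 0$; you instead invoke Hamilton's tensor maximum principle, i.e.\ barrier derivatives of the extremal eigenvalue with a constant reaction bound, getting a linear-in-time bound. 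Both routes integrate to the same conclusion, and your explicit justification that $\abs{\bar{R}m}_{\bar g}+\abs{\bar{\n}\bar{R}m}_{\bar g}$ is \emph{globally} bounded on the half Schwarzschild space (decay at infinity, smoothness down to the horizon where $\phi=2$) supplies a point the paper uses only implicitly.

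One step you flag but should actually carry out: the principal curvatures are eigenvalues of $h_{ij}$ relative to the \emph{evolving} metric $g_{ij}$ (equivalently, of the Weingarten map $g^{jk}h_{ik}$), so the barrier derivative of $\kappa_{\max}$ picks up, besides $Q(v,v)$, the term $-\kappa\,\dot g(v,v)=-2(h-H)\kappa^2$ coming from $\dot g_{ij}=2(h-H)h_{ij}$. As written, your inequality $\frac{d}{dt}\kappa_{\max}\leq C_2$ does not follow from $\abs{Q_{ij}}\leq C_2$ alone. The fix is immediate: on $\mathcal{S}$ one has $\abs{h-H}\leq 9n$ and $\kappa^2\leq 16$, so this extra term is bounded by a constant that you absorb into $C_2$, and your linear bound survives. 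With that term included, your argument is complete at the same level of rigor as the paper's; in particular both proofs leave the case $S'=T$ (where extending the flow past $S'$ requires the derivative estimates of Proposition \ref{thm3.2}) to the later part of the section.
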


\begin{proof}
It suffices to establish an upper bound for the maximal principal curvature $\kappa_{max}=\max_{M_t}\kappa_i$ and a lower bound for the minimal principal curvature $\kappa_{min}=\min_{M_t}\kappa_i$. 
Since $\kappa_{max},\kappa_{min}$ are not differentiable, we employ an approximation procedure. 
For any $\beta>0$, define: 
\begin{align*}
u_{2}(x_{1},x_{2}) &=\quad\frac{x_1+x_2}2+\sqrt{\left(\frac{x_1-x_2}2\right)^2+\beta^2},\\
u=u_{n}(x_1,\ldots,x_{n})&=\quad\frac1{n}\sum_{i=1}^{n}u_2(x_i,u_{n-1}(x_1,\ldots,\hat{x}_i.\ldots,x_{n})),n\geq3.
\end{align*}
Direct calculations yield 
\[
    \pfrac{}{t}u=\f {\pa u}{\pa h_{ij}}(\pfrac{}{t}h_{ij}),\]
\[\Delta u=\f{\pa^2u}{\pa h_{pq}\pa h_{ij}}\n^lh_{pq}\n_lh_{ij}+\f{\pa u}{\pa h_{ij}}\Delta h_{ij}.
\]
Given that the curvatures are bounded, specifically, $\abs{\bar{R}m},\abs{\bar{\n} \bar{R}m}\leq \Lambda$, by the properties of $u$ (see Lemma 3.4 in \cite{Miglioranza2020TheVP}) and the evolution equations in Lemma \ref{evoeqns}, we derive
\begin{align*}
    \pfrac{}{t}u 
    =& \Delta u+\f{\pa u}{\pa h_{ij}}\left(\pfrac{}{t}h_{ij}-\Delta h_{ij}\right)-\f{\pa^2u}{\pa h_{pq}\pa h_{ij}}\n^lh_{pq}\n_lh_{ij}\\
    \leq &\Delta u + \f {\pa u}{\pa h_{ij}}\left(\abs{A}^2h_{ij}+\Lambda (5h_{ij}+\delta_{ij}h)-hh_{il}h_{lj}+2\Lambda\right)\notag\\
    &-\f{\pa^2u}{\pa h_{pq}\pa h_{ij}}\n^lh_{pq}\n_lh_{ij}\\
     \leq &\Delta u+2\abs{A}^2u+5\Lambda u+2\Lambda+\Lambda h\\
    \leq& \Delta u+(2C'^2+5\Lambda)u+2C'\Lambda.
\end{align*}
The maximum principal then implies that 
\[
u(t)\leq u(0)e^{(2C'^2+5\Lambda)t}+\frac{2C'\Lambda}{2C'^2+5\Lambda}(e^{(2C'^2+5\Lambda)t}-1).
\]
Letting $\beta\rightarrow0$, we have $u(t)\rightarrow \kappa_{max}(t)$. By solving the inequality
\[ 2e^{(2C'^2+5\Lambda)t}+\frac{2C'\Lambda}{2C'^2+5\Lambda}(e^{(2C'^2+5\Lambda)t}-1)\leq4, \]
we get there is $T_1=T_1(n)$ such that $\kappa_{max}(t)\leq 4$ when $t\leq T_1$. We can similarly prove that for some $T_2=T_2(n)$, $\kappa_{min}(t)\geq\f14$ when $t\leq T_2$. Therefore, we obtain
\[ S'\geq min\{T_1, T_2\}:=C_n. \]
\end{proof}

Once the maximal time lower bound is established, we can use it to control the geometry at this maximal time, which in turn helps to refine the curvature estimates. Hence, we find ourselves in a situation where the conditions match the initial setup exactly. We can then extend the flow a bit longer, which leads to a contradiction. To control the geometry of the hypersurface at the maximal time, we need to estimate the derivatives of the second fundamental form.
The following estimate is Theorem 1.14 in \cite{Miglioranza2020TheVP} and we state it briefly.

\begin{prop}\lab{thm3.2}
    Assume $\abs{H}(x,t)\leq C_1,\abs{A}(x,t)\leq C, \forall t\in[0,T']$. Suppose further that $\abs{\bar{R}m}\leq C_2, \abs{\bar{\n}\bar{R}m}\leq C_2$. Then there exists a constant $D=D(C,C_1,C_2,T,M_0)$ such that, for $\forall t\in (0, T']$, 
    \[
    \sup_{M_t}t\abs{\n A}^2\leq D.
    \]
\end{prop}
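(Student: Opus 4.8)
The plan is to prove this as an interior-in-time gradient estimate of Bernstein--Bando--Shi type, by applying the parabolic maximum principle to a carefully weighted combination of $\abs{\n A}^2$ and $\abs{A}^2$. The first step is to record the evolution equation for $\abs{\n A}^2$. Differentiating the equation for $\pa_t h_{ij}$ in Lemma \ref{evoeqns}, commuting $\n$ with $\Delta$ (which produces curvature terms), and contracting, one obtains schematically
\[
\left(\pa_t-\Delta\right)\abs{\n A}^2 = -2\abs{\n^2 A}^2 + \mathcal{R},
\]
where the reaction term $\mathcal{R}$ is a sum of terms of the form $\abs{A}^2\abs{\n A}^2$ together with contractions of the ambient tensors $\bar{R}m,\bar{\n}\bar{R}m,\bar{\n}^2\bar{R}m$ against $A$ and $\n A$. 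Crucially, because $h(t)$ (and $h_0(t)$ for the APMCF) is constant on each $M_t$, it contributes no $\n h$ terms, so the only genuinely quadratic-in-$\n A$ reaction term is the $\abs{A}^2\abs{\n A}^2$ one. I also recall the companion identity $\left(\pa_t-\Delta\right)\abs{A}^2 = -2\abs{\n A}^2 + 2\abs{A}^4 + (\text{bounded curvature and } h \text{ terms})$, read off from the last line of Lemma \ref{evoeqns}.

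The second step is the auxiliary function and the maximum principle. Under the standing hypotheses $\abs{A}\leq C$, $\abs{H}\leq C_1$, $\abs{\bar{R}m},\abs{\bar{\n}\bar{R}m}\leq C_2$ (and, as discussed below, a bound on $\bar{\n}^2\bar{R}m$), together with $h\leq C'$ and $t\leq T'\leq T$, the reaction term satisfies $\mathcal{R}\leq c_1 C^2\abs{\n A}^2 + c_2\abs{\n A} + c_3$ with constants depending only on $C,C_1,C_2,C'$; absorbing the linear term by Young's inequality gives $\mathcal{R}\leq (c_1C^2+1)\abs{\n A}^2 + K_1$. I then set
\[
f := t\abs{\n A}^2 + \gamma\abs{A}^2 ,
\]
so that
\[
\left(\pa_t-\Delta\right)f \leq \big[\,1 + T'(c_1C^2+1) - 2\gamma\,\big]\abs{\n A}^2 + K_1 T' + \gamma K_0,
\]
where $K_0$ bounds the lower-order part of $\left(\pa_t-\Delta\right)\abs{A}^2$. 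Choosing $\gamma := \tfrac12\big(1 + T'(c_1C^2+1)\big)$ annihilates the coefficient of $\abs{\n A}^2$, leaving $\left(\pa_t-\Delta\right)f \leq K$ for a constant $K=K(C,C_1,C_2,T)$. Since $M_t$ is closed, Hamilton's form of the maximum principle gives $\tfrac{d}{dt}\max_{M_t}f \leq K$, hence $\max_{M_t}f \leq \max_{M_0}f + KT' = \gamma\sup_{M_0}\abs{A}^2 + KT'$; the term $\sup_{M_0}\abs{A}^2$ is finite because $M_0$ is a fixed smooth hypersurface, which is the source of the $M_0$-dependence of $D$. Therefore $t\abs{\n A}^2\leq f\leq D$ with $D = D(C,C_1,C_2,T,M_0)$, as claimed.

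The main obstacle is not the maximum-principle bookkeeping but the appearance of $\bar{\n}^2\bar{R}m$ in $\mathcal{R}$: the hypotheses as stated bound only $\bar{R}m$ and $\bar{\n}\bar{R}m$. This is harmless in the present setting, because the a priori bounds $\abs{H},\abs{A}\leq C'$ valid on $\mathcal{S}$, combined with Bonnet--Myers, confine the flow to a fixed compact region of the (smooth) Schwarzschild space, on which all covariant derivatives of the ambient curvature are automatically bounded; the resulting bound on $\bar{\n}^2\bar{R}m$ is then absorbed into the constants $c_2,c_3,K$ above. Alternatively one quotes the estimate directly from Theorem~1.14 of \cite{Miglioranza2020TheVP}, whose hypotheses are arranged to supply exactly this. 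A secondary point requiring care is the derivation of the $\abs{\n A}^2$ evolution equation itself, namely verifying that the global, spatially constant factor $h(t)$ produces no term of order $\abs{\n A}^2$ with an uncontrolled sign; this is precisely what leaves $c_1 C^2$ as the only obstruction to the sign choice for $\gamma$.
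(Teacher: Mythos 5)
Your proof is correct and takes essentially the same route as the paper: both arguments apply the parabolic maximum principle to an auxiliary function combining $t|\nabla A|^2$ with $|A|^2$, using the good term $-2|\nabla A|^2$ in the evolution of $|A|^2$ to absorb the quadratic-in-$\nabla A$ reaction terms. The only real difference is the shape of the weight: the paper (following Miglioranza/Shi) takes the multiplicative form $f=t|\nabla A|^2(\Lambda+L_1|A|^2)+\frac{L_2}{2}|A|^2$ and concludes from $(\partial_t-\Delta)f\leq Lf+tR+K_5$, whereas you take the additive $f=t|\nabla A|^2+\gamma|A|^2$ with $\gamma$ tuned so the $|\nabla A|^2$ coefficient cancels exactly; your simplification is legitimate here precisely because $|A|\leq C$ is hypothesized, so no quartic absorption term is needed. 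Two small remarks: your aside that $|A|^2|\nabla A|^2$ is the \emph{only} quadratic-in-$\nabla A$ reaction term is not quite accurate (the terms $\nabla A*\nabla A*\bar{R}m$ and $hA*\nabla A*\nabla A$ are also quadratic), but they are absorbed by the same constants, so nothing breaks; and your observation that the evolution equation involves $\bar{\nabla}^2\bar{R}m$, which the stated hypotheses do not bound, identifies a genuine imprecision in the proposition as written -- your fix (bounded speed over a finite time interval confines the flow to a compact region, and in fact all covariant derivatives of the Schwarzschild curvature are globally bounded) is valid, and is the same fact the paper implicitly relies on via the citation of Miglioranza's Theorem 1.14.
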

\begin{proof}
   Recall the evolution equation for $\abs{\n A}^2$:
    \[
    \begin{aligned}
    \frac{\partial}{\partial t}|\nabla A|^{2}&= \Delta|\nabla A|^2-2|\nabla^2A|^2+A*A*\nabla A*\nabla A+\nabla A*\nabla A*\overline{Rm}+\\
    &+A*\nabla A*\bar{\nabla}\overline{Rm}+\nabla A*\bar{\nabla}^2\overline{Rm}+hA*\nabla A*\nabla A+\\
    &+h\nabla A*\bar{\nabla}\overline{Rm}+hA*\nabla A*\overline{Rm}.
    \end{aligned}
    \]
   Define the  function $f(x, t)$ as follows: for some positive constants $\Lambda,L_1$ and $L_2$ that depend only on the given constants $C_1,C_2,C,T$,
$$
f(x,t)=t\:|\nabla A|^2\big(\Lambda+L_1|A|^2\big)+\frac{L_2}{2}|A|^2.
$$
After a lengthy and tedious computation, we finally obtain the following estimate, where the constant $L, R, K_5$ depends again only on the initial geometry and $C_1, C_2, C, T$:
\[
\begin{aligned}
\Big(\frac{\partial}{\partial t}-\Delta\Big)f(x,t)
\leq Lf(x,t)+tR+K_{5}.
\end{aligned}
\]
The maximum principle again gives that 
\[
\sup_{x\in M_t}|\nabla A|\leq\frac{D}{\sqrt{t}},\quad \forall t\in(0,T'].
\]
\end{proof}

We now restate Theorem \ref{thm1.3} here for readers' convenience and apply this to show the long-time existence. 

\begin{thm}\lab{thm3.4}
    Let  $\Om_k\Subset \Om\sub N$ be a sequence of bounded domains in the Schwarzschild space, where $\Omega$ contains the horizon, and let  $M_k\subset\pa \Om_k$ be a sequence of smooth boundary components homologous to the horizon (in particular, $\pa\Om_k$ is an embedding). Let $\Om_0$ denote the domain enclosed by the horizon and some coordinate sphere such that there exist two positive constants $C_i, i=1,2$, and a sequence of $\rho_k$ for all $k$, satisfying
    \begin{enumerate}
        \item $\abs{\Om_k}=\abs{\Om_0}, \abs{\pa\Om_k}\leq C_1$;
        \item $\abs{A}_{\pa\Om_k}\leq C_2, \abs{\n A}_{\pa\Om_k}\leq C_2$;
        \item $\f{\abs{M_k}^{n+1}}{|{\Om_k}|^n}\leq I_S(|\Omega_0|)+\rho_k$, with $\rho_k\rightarrow0$ as $k\to \infty$;
    \end{enumerate}
    where $|\cdot|$ represents the volume of a domain or the area of a hypersurface, $A$ is the second fundamental form, and $I_S$ is the isoperimetric ratio of the Schwarzschild space defined in
    Theorem \ref{thm1.3}.
Then, up to a subsequence, $M_k$ converges to some coordinate sphere $\Sgm_\infty$. Let $\Om_\infty$ denote the domain enclosed by $M_\infty$ and the horizon; we have $\abs{\Om_\infty}=\abs{\Om_0}$. 
\end{thm}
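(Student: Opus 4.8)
The plan is to run two compactness arguments in parallel---one geometric, one measure-theoretic---and then use the isoperimetric-ratio hypothesis (3) to match their limits and to forbid any loss of area. Since all the $\Om_k$ lie in the fixed bounded set $\Om$, no mass can escape into the asymptotically flat end. The uniform bounds $\abs{A}_{\pa\Om_k}\leq C_2$ and $\abs{\n A}_{\pa\Om_k}\leq C_2$ let me cover each $M_k$ by a controlled number of balls in which $M_k$ is the graph of a function with uniformly bounded $C^{2,\alpha}$ norm for each $\alpha<1$; standard hypersurface compactness then produces a subsequence, still written $M_k$, converging in $C^2_{loc}$ to a $C^2$ immersed limit $M_\infty$, a priori with some integer multiplicity. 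Simultaneously I regard each $\Om_k$ as a set of finite perimeter: condition (1) gives $\abs{\Om_k}=\abs{\Om_0}$ and $\mathrm{Per}(\Om_k)\leq C_1+\abs{\Sgm_0}$, so BV-compactness yields (on a further subsequence) $\chi_{\Om_k}\to\chi_{\Om_\infty}$ in $L^1$. Because this convergence takes place on the bounded set $\Om$, the volume passes to the limit, giving $\abs{\Om_\infty}=\abs{\Om_0}$, while lower semicontinuity of perimeter yields $\abs{M_\infty}\leq\liminf_k\abs{M_k}$ (the fixed horizon contribution cancels).

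Next I feed the isoperimetric ratio into the limit. Since $\Om_\infty$ is homologous to the horizon and has volume $\abs{\Om_0}$, the very definition of $I_S(\abs{\Om_0})$ forces $\abs{M_\infty}^{n+1}/\abs{\Om_0}^n\geq I_S(\abs{\Om_0})$. On the other hand, semicontinuity of area together with hypothesis (3) and $\abs{\Om_k}=\abs{\Om_0}$ gives
\[
\f{\abs{M_\infty}^{n+1}}{\abs{\Om_\infty}^n}\leq \liminf_k \f{\abs{M_k}^{n+1}}{\abs{\Om_k}^n}\leq \liminf_k\big(I_S(\abs{\Om_0})+\rho_k\big)=I_S(\abs{\Om_0}).
\]
Hence equality holds throughout: $\Om_\infty$ realizes the isoperimetric ratio, and, crucially, $\abs{M_\infty}=\lim_k\abs{M_k}$ along the chosen subsequence. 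This last identity says that no area is lost in the limit, which is exactly the assertion that $M_k\to M_\infty$ with multiplicity one and that $M_\infty$ coincides, as a set carrying the correct area, with the reduced boundary $\pa^*\Om_\infty\setminus\Sgm_0$. This is where hypothesis (3) does the essential work: any doubling of sheets or collapsing slab would contribute extra area to $\lim_k\abs{M_k}$ without altering $\abs{\Om_k}$, strictly raising the isoperimetric ratio and contradicting the near-optimality forced by $\rho_k\to0$.

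Finally, having identified $\Om_\infty$ as an isoperimetric domain of volume $\abs{\Om_0}$ homologous to the horizon, I invoke the classification of isoperimetric (equivalently, embedded stable CMC) hypersurfaces in the Schwarzschild space, which are precisely the coordinate spheres (Bray, together with Brendle's uniqueness theorem \cite{Brendle2013constant}); this is exactly the point at which the half-Schwarzschild model is needed, since uniqueness fails in the doubled space. Consequently $M_\infty$ is a coordinate sphere, the smooth limit is attained with multiplicity one, and $\abs{\Om_\infty}=\abs{\Om_0}$, which completes the argument. I expect the main obstacle to be precisely the multiplicity step of the previous paragraph: reconciling the smooth (varifold) limit with the finite-perimeter limit and certifying that the two carry the same area. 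The sharp control of the isoperimetric ratio is indispensable here, and the remaining pieces---local graph estimates, BV-compactness, and the cited uniqueness theorem---are then routine to assemble.
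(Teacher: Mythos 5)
Your proposal is correct and its skeleton matches the paper's proof: local graph compactness from the bounds on $\abs{A}$ and $\abs{\n A}$, hypothesis (3) used to force multiplicity one, and identification of the limit as a coordinate sphere via the equality case of the isoperimetric inequality (i.e., the Bray/Brendle classification of isoperimetric hypersurfaces in the half-Schwarzschild space, which the paper invokes only implicitly in the phrase ``by the isoperimetric inequality''). The genuine difference is in the bookkeeping. The paper proves a Uniform Graph Lemma, bounds the number of sheets $s_{k,p}$ over each ball by comparing the area of a single graph sheet with the total area bound, and then asserts in one line that condition (3) forces $s=1$; it never explicitly verifies that $\abs{\Om_\infty}=\abs{\Om_0}$ or that area converges (rather than merely lower-semicontinuously drops) in the limit. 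You instead run a parallel BV/finite-perimeter compactness argument: $\chi_{\Om_k}\to\chi_{\Om_\infty}$ in $L^1$ on the fixed bounded set $\Om$ gives volume convergence for free, and lower semicontinuity of perimeter squeezed against hypothesis (3) yields the equality $\abs{M_\infty}=\lim_k\abs{M_k}$, which simultaneously excludes higher multiplicity and hidden collapsed slabs (sheets that cancel in the BV limit but still carry area). This buys rigor precisely at the two points where the paper is terse --- the multiplicity-one claim and the volume of the limit --- at the cost of having to reconcile the smooth (varifold) limit with the reduced boundary $\pa^*\Om_\infty$, and of needing the isoperimetric inequality in the class of finite-perimeter competitors rather than only smooth ones; both are standard but should be stated if you write this up.
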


    The proof is modelled from the compactness theory for the minimal surfaces \cite{colding2011course,Miglioranza2020TheVP}. Here, we would like to emphasize that the ambient space is not the Euclidean space. 
    Thus, additional effort is required to estimate the graph function and its derivatives. We aim to illustrate that a properly embedded hypersurface $M$ with bounded norm of the second fundamental form must admit a uniform radius, meaning $M$ can be represented as a graph over some ball of the fixed radius \cite{Ros2002global}.
    To show this, we first introduce some notations. Fix a point $p\in M$ and a radius $r>0$ and denote by 
		\[
		B(p,r)=\{p+v: v\in T_pM, \abs{v}<r\},
		\]
		and 
		\[
		W(p,r)=\{q+t\nu(q): q\in B(p,r), t\in\R, \nu \text{ is the Gauss map}\},
		\]
		\[
		W(p,r,\eps)=\{q+t\nu(q): q\in B(p,r),\abs{t}<\eps\}.
		\]
        
\begin{lem}[Uniform Graph Lemma for manifolds]\lab{lem3.5}
Let $M\sub N$ be a properly embedded hypersurface. Suppose there exists a positive constant $c$ such that $\abs{A}\leq c$ on $M$. Let $p\in M$ be an arbitrary point. With the abuse of notation, we can find a positive constant $R=R(c,N)$ and a smooth function $u:B(p, R)\rightarrow\R$ such that
\begin{enumerate}
    \item $M\cap W(p,R)=graph(u)\cap W(p,R)$.
    \item $\abs{u}, \abs{\n u}, \abs{\n^2 u}$ are all bounded.
\end{enumerate}
\end{lem}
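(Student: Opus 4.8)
The plan is to reduce the statement to the classical uniform graph lemma in Euclidean space and then read off the Schwarzschild estimates from the conformal structure. The key simplifying observation is that the metrics $\bar{g}=\phi^{4/(n-1)}\delta$ and $\delta$ are conformal, so at every point they determine the \emph{same} tangent hyperplane $T_pM$ and the \emph{same} normal line; the sets $B(p,R)$, $W(p,R)$ and the very notion of ``graph in the direction $\nu(p)$'' are therefore independent of which of the two metrics we use, and only the measurement of lengths and curvatures differs by powers of $\phi$. On any region bounded away from the horizon --- which is where the hypersurfaces in our application lie --- the conformal factor $\phi=1+\frac{m}{2r^{n-1}}$ together with $\pa\phi$ and $\pa^2\phi$ is bounded above and below. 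Using the standard conformal transformation law for the second fundamental form (cf. \cite{huisken1996definition}), which expresses the Euclidean principal curvatures $\kappa_i^e$ in terms of the Schwarzschild ones $\kappa_i$, $\phi$ and $\pa_{\nu}\phi$, the hypothesis $\abs{A}_{\bar{g}}\le c$ converts into a Euclidean bound $\abs{A^e}_\delta\le c'$ with $c'=c'(c,N)$. It therefore suffices to produce a uniform $R=R(c',N)$ and a Euclidean graph with Euclidean derivative bounds, and then convert $\abs{u},\abs{\n u},\abs{\n^2u}$ back using the bounded factors $\phi^{\pm1}$, $\pa\phi$.

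For the Euclidean construction I would first control the rotation of the Gauss map $\nu_e$. Since $\abs{d\nu_e}=\abs{A^e}$ along $M$, any point $q$ joined to $p$ by a path in $M$ of Euclidean length $\ell$ satisfies $\abs{\nu_e(q)-\nu_e(p)}\le\int\abs{A^e}\le c'\ell$; hence, for $q$ in the intrinsic ball of radius $\tfrac{1}{4c'}$ about $p$ we get $\langle\nu_e(q),\nu_e(p)\rangle\ge\tfrac34$, so the tangent planes along this piece make a uniformly small angle with $T_pM$. On such a piece the orthogonal projection $\pi\colon M\to T_pM$ has differential uniformly close to the identity, hence is an immersion, and we may choose $R=R(c',N)$ so small that the sheet of $M$ through $p$ projects diffeomorphically onto $B(p,R)$. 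This sheet is then the graph of a function $u\colon B(p,R)\to\R$ in the direction $\nu_e(p)$, and the angle control gives $\abs{\n u}=\tan(\text{angle})\le 1$, while the relation between the Euclidean Hessian of $u$ and the second fundamental form gives $\abs{\n^2u}\le C(c')$; the bound on $\abs{u}$ follows by integrating $\abs{\n u}$ over $B(p,R)$. All constants depend only on $c'$ and $N$, so $R$ is uniform in $p$, as required.

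The main obstacle is item (1): showing that $M\cap W(p,R)$ is \emph{exactly} this single graph and that no other part of $M$ enters the cylinder. Locally the curvature bound only controls the sheet through $p$; a priori a distant portion of $M$ could re-enter $W(p,R)$. This is precisely where the \emph{proper embeddedness} hypothesis is used: properness makes $M\cap\overline{W(p,R)}$ compact with no accumulation of sheets, and the curvature bound limits how sharply $M$ can fold back into the cylinder. After shrinking $R$ (and restricting to a thin slab $W(p,R,\eps)$ with $\eps$ comparably small), the projection $\pi\colon M\cap W(p,R)\to B(p,R)$ is proper, and a degree/covering argument forces it to have degree one, ruling out additional sheets; in the application the $M_k$ are compact boundary components, so properness is automatic and the argument closes. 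This one-sheetedness, rather than the routine local graph estimates, is the delicate point, and it is where the non-Euclidean ambient geometry (through the conformal transfer of curvature and of the slab geometry) demands the extra care alluded to before the statement.
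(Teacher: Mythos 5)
Your local construction is sound and is a legitimate variant of the paper's argument: where you reduce to the Euclidean uniform graph lemma via the conformal factor (using that $\phi$, $\pa\phi$, $\pa^2\phi$ are bounded and the conformal transformation law for $A$), the paper instead works intrinsically in $N$ with normal coordinates, proves $\abs{\n\nu_{n+1}}\leq 2c$ from the curvature bound, and runs a maximal-radius continuation argument; the two routes give the same gradient, Hessian and height bounds. One caveat even in this part: your assertion that $R$ depends only on $c'$ and $N$ overlooks the boundary of $N$ (the horizon). The paper's continuation argument has a second alternative, in which the graph runs into $\pa N$, and the resulting radius is $R=\min\set{\frac{1}{4c},\frac{1}{2}d(p,\pa N)}$; any reduction to the Euclidean lemma must keep track of this, since $\R^{n+1}\setminus B_1$ has a boundary too.

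The genuine error is your final paragraph. The claim that proper embeddedness plus a degree/covering argument forces $M\cap W(p,R)$ to be a \emph{single} graph is false, and no shrinking of $R$ can fix it: $W(p,R)$ is an infinite solid cylinder, so a large round sphere through $p$ (properly embedded, with $\abs{A}$ as small as you like) meets $W(p,R)$ in \emph{two} caps, the one through $p$ and the antipodal one. Each cap projects to $B(p,R)$ with degree one, and nothing in properness or in the curvature bound limits the number of such sheets to one; bounded curvature only prevents a single sheet from folding back within a small slab, not distant portions of $M$ from re-entering the cylinder. Accordingly, the lemma must be read — and is in fact used — as a statement about the sheet through $p$: in the proof of Theorem \ref{thm3.4} the intersection $M_k\cap W(p,R)$ is written as a union of $s_{k,p}$ disjoint graphs $V_k^l$, the number $s_{k,p}$ is bounded using the area bound (not properness), and multiplicity one is obtained only at the very end from the isoperimetric-ratio hypothesis (condition (3)), which is a global hypothesis absent from Lemma \ref{lem3.5}. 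So the ``delicate point'' you isolate is not part of the lemma at all, and the argument you propose for it would fail; what is actually needed downstream is the count of sheets plus the isoperimetric pinching, which your proposal does not supply.
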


\begin{proof}
    Since $M$ is an embedding, locally we can find a cylinder $W(p,R)$ and a smooth function $u$ with the property (1) holds. We now want to show that we can find a uniform lower bound of $R$ depending only on the geometry of the ambient space and the constant $c$. Choose normal coordinates $\{x_i\}$ around $p\in M$;  locally up to a rotation, we can assume $p=0,T_pM=\set{y=0}$. The graph around $p$ is given by $y=\psi(x)=x+u(x)\nu(p)$, and $\nu(p)=(0,\cdots, 1)$, where $u:B(p,R)\sub T_pM\rightarrow \R$ and $\nu$ is the outward unit normal of $M$ in $N$. 
     We define 
    \[
    W^2=1+\bar{g}^{ij}u_iu_j.
    \] 
    Let $\{e_i\}$, $i=1, 2,\cdots,n+1,$ denote the orthonormal basis in $N$. And set $\nu_{n+1}=\<\nu,e_{n+1}\>=\f1W$. It is obvious that $\nu_{n+1}(p)=1$, so we may assume $\nu_{n+1}\geq\f12$ in $B(p,R)$.  
    \begin{claim}
     $\abs{\n \nu_{n+1}}\leq 2c$.
    \end{claim}

Proof of claim,
    \begin{align*}
        &(\nu_{n+1})_{x_i}  =  \<\n_{\psi_{x_i}}\nu, e_{n+1}\>\\
         = & \<\sum_jA(\psi_{x_i},\psi_{x_j})\psi_{x_j}, e_{n+1}\>
         =  A_{ij}u_j.
    \end{align*}
So we have 
\[
\abs{\n \nu_{n+1}}^2\leq \sum_i(\sum_jA_{ij}u_j)^2\leq W^2|A|^2=\nu_{n+1}^{-2}|A|^2\leq4c^2.
\]

Next we aim to derive a lower bound for the radius. Suppose now that $R$ is the maximal radius for which properties (1) and (2) hold. Observe that if $u$ is defined on $\pa B(p,R)$ with $\nu_{n+1}>\f12$ on $\pa B(p,R)$, then we can continuously extend $u$ beyond $B(p,R)$, which contradicts the maximality of $R$. It follows that two cases will happen. 
\begin{enumerate}
    \item[a)] There must exist some $q\in\pa B(p,R)$ with $\nu_{n+1}(q)=\f12$;
    \item[b)] There exists a sequence $ \{q_k\}\in B(p,R)$ with  $d(\psi(q_k),\pa N)\rightarrow0$.
\end{enumerate}
In case a), we obtain
\[
\f12=\abs{\nu_{n+1}(p)-\nu_{n+1}(q)}\leq \abs{\n \nu_{n+1}}|p-q|\leq 2cR.
\]
Thus, we have $R\geq \f1 {4c}$. In case b), 
\[
d(p,\psi(q_k))\leq length(\psi[p,q_k]),
\]
where $[p, q_k]$ is the segment in $B(p, R)$ joining $p$ and $q_k$.
However, 
\[
length(\psi[p,q_k])=\int_0^{\abs{q_k}}\sqrt{1+\abs{\n u}^2}ds<2\abs{q_k}<2R,
\]
yielding 
\[
d(p,\pa N)\leq d(p, \psi(q_k))+d(\psi(q_k),\pa N)<2R+d(\psi(q_k),\pa N)\to 2R.
\]
Therefore, setting $R=\min\set{\f1{4c},\f12d(p,\pa N)}$, we obtain a uniform lower bound for the radius, which proves (1) in Lemma \ref{lem3.5}.
Concerning (2), 
define $E_i=\psi_{x_i}$ and linear maps $T_i^j$ by $E_i=e_i+u_ie_{n+1}=T_i^je_j$; then $\{E_i\}$ forms a basis for $T_pM$. Let $g_{ij}=\bar{g}(E_i,E_j)$ denote the induced metric on $M$, and let $\Gamma_{ij}^k$ denote the Christoffel symbols of $N$.
     The unit normal is given by $\nu=\f1W( -\nabla u,1)$ and
    \[
    \<\nu,e_i\>=-\f{u_i}W,\quad \<\nu,e_{n+1}\>=\f1W.
    \]
    The mean curvature of $M$ is given by 
  \[
    H=-g^{ij}\<\nu,\n_{E_i}E_j\>.
   \]
    We compute that 
 \[
    g^{ij}=\bar{g}^{ij}-W^{-2}\bar{g}^{ip}\bar{g}^{jl}u_pu_l,
 \]
and 
\begin{align}\lab{eqn3.4}
    \n_{E_i}E_j &=(T_i^p\bar{\n}_{e_p}T_j^le_l)^T
    =\left(T_i^pT_j^l\bar{\n}_{e_p}e_l+(T_i^p\bar{\n}_{e_p}T_j^l)e_l\right)^T\\ \notag
    & = T_i^pT_j^l\Gamma_{pl}^m e_m+E_i(T_j^l)e_l    =T_i^pT_j^l\Gamma_{pl}^me_m+u_{ij}e_{n+1}. 
\end{align}
Thus, we obtain 
\[
A_{ij}=-\<\nu,\n_{E_i}E_j\>=\f{u_mT_i^kT_j^l\Gamma_{kl}^m}W-\f{u_{ij}+T_i^kT_j^l\Gamma_{kl}^{n+1}}W.
\]
Substituting $T_i^p=\delta_i^p+u_i\delta_{n+1}^p$ into equation \rf{eqn3.4}, we obtain 
\begin{align}\label{ainj}
    A_{ij}= & \f{u_m}W (\Gamma_{ij}^m+u_j\Gamma_{in+1}^m+u_i\Gamma_{n+1j}^m+u_iu_j\Gamma_{n+1n+1}^m)\\ \notag
    - & \f1W(u_{ij}+\Gamma_{ij}^{n+1}+u_j\Gamma_{in+1}^{n+1}+u_i\Gamma_{n+1j}^{n+1}+u_iu_j\Gamma_{n+1n+1}^{n+1}).
\end{align}
 
Noting that $M$ is locally a graph over some balls of fixed radius and given our assumption that $W\leq2$, it follows from $\abs{A}\leq c$ that  $\abs{\n^2 u}\leq 2c+28C(n)$, where $C(n)$ is an upper bound of $\Gamma_{ij}^k$ in \eqref{ainj}. Finally using the mean value theorem, we obtain
\[ |u(q)|=|u(p)-u(q)|\leq|\n u||p-q|\leq |\n^2 u||p-q|^2, \]
which completes the proof.
 \end{proof}

\begin{proof}[Proof of Theorem \ref{thm3.4}]
    Using the uniform graph lemma, we can find a uniform radius $R>0$ such that, on $B(p,R)$, every $M_k\cap W(p,R)$ can be represented as disjoint graphs $\cup_l(V_k^l\cap B(p,R))$, for $l=1,\cdots,s_{k,p}=s(p,k),$ with graph function $v_k^l:B(p,R)\rightarrow \R$. 
    Furthermore, the derivatives of $v_k^l$ up to second order are all bounded. 
    Hence, by passing to a subsequence, the graph functions $v_k^l$  converge to a limiting function $v^l:B(p,R)\rightarrow\R$. This shows that $M_k$ converges  to limiting graphs $V^l$ in $W(p,R)$.
    
    Now, the area of the graphs are uniformly bounded. Indeed, on each graph $V_k^l$, $(g^l_k)_{ij}=\bar{g}_{ij}+(v_k^l)_i(v_k^l)_j$, we immediately get $\det((g_k^l)_{ij})=\det(\bar{g}_{ij})(1+U_k^l\bar{g}^{-1}{U_k^l}^T)$, where $U_k^l=((v_k^l)_1,\cdots,(v_k^l)_n)$. It follows that 
    \[
Area(graph(v_k^l))=\int_{B(p,R)}\sqrt{\det((g_k^l)_{ij})}=\int_{B(p,R)}W_k^l\sqrt{\det(\bar{g}_{ij})}
    \]
   Since $W_k^l$ is uniformly bounded, we then obtain that 
   \[
   2\mu_N(B(p,R))\geq Area(graph(v_k^l))\geq \mu_N(B(p,R)).
   \]
   for every $k$ and every $l$, where $\mu_N$ is the volume measure of $N$. Therefore, from $M_k\cap W(p,R)=\cup_l(V_k^l\cap B(p,R)$, we get that 
   \[
   s_{k,p}\leq \f{\mu_N(M_k\cap W(p,R))}{Area(graph(v_k^l))},
   \]
    this shows  
    \[
    s_{k,p}\leq \f{\mu_N(M_k)}{c_{N,R}\om_nR^{n+1}}\leq C_1,
    \]
    where $c_{N,R}$ is a positive number depending on the geometry of $N$ and radius $R$, since the ambient space is curved and $C_1=C_1(N,\max{\abs{A}},d(p,\pa N))$. In particular, the multiplicity of the limiting surface is finite.
    
    Since the third condition ensures that the isoperimetric ratio of $M_k$ converges to $I_S(|\Omega_0|)$, this implies that the multiplicity $s=1$.
    By the isoperimetric inequality,  the limiting hypersurface has to be a coordinate sphere since this limiting hypersurface attain the isoperimetric ratio, this completes the proof.
\end{proof}

As an application, we have the following corollary.

\begin{cor}\label{cor3.6}
Let $(N^{n+1}, \bar{g})$ be the Schwarzschild space. For any $\delta>0$ and hypersurface $M$, homologous to the horizon, there exists $$\rho=\rho(\norm{A_M},\norm{\n A_M},Area(M),\delta,n)>0$$ such that if 
the isoperimetric profile of $M$ is controlled by $I_{M}\leq I_S(|\Omega|)+\rho$, where $I_M=\f{\abs{M}^{n+1}}{\abs{\Omega}^n}$, and $\Omega$ denotes the domain enclosed by $M$ and the horizon $\Sigma_0$, and $\norm{A_M},\norm{\n A_M}$ are all controlled, then $M$ must be $C^2$-close to some coordinate sphere $\Sigma_r$. 
Moreover, there exists a smooth function $u:\Sigma_r\rightarrow M$ such that $M$ can be represented as a radial graph over the coordinate sphere $\Sigma_r$ with $\norm{u}_{C^2(\Sigma_r)}\leq\delta$.
\end{cor}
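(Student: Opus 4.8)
The plan is to argue by contradiction and reduce the statement to the compactness mechanism already developed for Theorem \ref{thm3.4}. Fix $\delta>0$ together with the bounds $C_1$ on $\mathrm{Area}(M)$ and $C_2$ on $\norm{A_M}+\norm{\n A_M}$, and suppose no admissible $\rho$ exists. Then for every $k$ there is a hypersurface $M_k$, homologous to the horizon and enclosing a domain $\Om_k$ together with $\Sigma_0$, with $\abs{M_k}\leq C_1$, $\abs{A}_{M_k}\leq C_2$, $\abs{\n A}_{M_k}\leq C_2$, satisfying $I_{M_k}\leq I_S(\abs{\Om_k})+\rho_k$ with $\rho_k=\f1k\to 0$, and yet such that no $M_k$ is $C^2$-$\delta$-close to any coordinate sphere. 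The goal is to show that such a sequence must subconverge in $C^2$ to a coordinate sphere, contradicting the last property.

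First I would control the enclosed volumes: each $M_k$ separates the horizon from infinity, so by the isoperimetric inequality $\abs{M_k}^{n+1}\geq I_S(\abs{\Om_k})\abs{\Om_k}^n$ together with the positive lower bound on the isoperimetric profile, the area bound $\abs{M_k}\leq C_1$ forces the $\abs{\Om_k}$ to be uniformly bounded and confines $M_k$ to a fixed compact coordinate annulus. Passing to a subsequence, $\abs{\Om_k}\to V_\infty$; let $\Om_0$ be the region bounded by $\Sigma_0$ and the coordinate sphere of volume $V_\infty$. Using the continuity of the isoperimetric profile $V\mapsto I_S(V)$, the third hypothesis becomes $I_{M_k}\leq I_S(V_\infty)+\tilde\rho_k$ with $\tilde\rho_k\to 0$. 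We are then exactly in the setting of the proof of Theorem \ref{thm3.4}: the Uniform Graph Lemma \ref{lem3.5} applies because $\abs{A}_{M_k}\leq C_2$, producing a fixed radius $R>0$ and local graph representations, while the extra bound $\abs{\n A}_{M_k}\leq C_2$ upgrades the uniform $C^2$ bounds on the graph functions to uniform $C^{2,\alpha}$ bounds. By Arzel\`a--Ascoli the graph functions subconverge in $C^2$, so $M_k\to M_\infty$ in $C^2$; the control of the isoperimetric ratio eliminates higher multiplicity, forcing multiplicity one, whence $\abs{M_k}\to\abs{M_\infty}$ and $\abs{\Om_k}\to\abs{\Om_\infty}=V_\infty$ with no loss of area. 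Passing to the limit in the isoperimetric ratio gives $I_{M_\infty}\leq I_S(V_\infty)=I_S(\abs{\Om_\infty})$, while the isoperimetric inequality gives the reverse bound; hence $M_\infty$ realizes the isoperimetric ratio and, by the uniqueness of isoperimetric hypersurfaces in the Schwarzschild space (cf. \cite{Brendle2013constant}), $M_\infty$ is a coordinate sphere $\Sigma_r$.

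Because the convergence $M_k\to\Sigma_r$ is in $C^2$, for all large $k$ the hypersurface $M_k$ is $C^2$-$\delta$-close to the coordinate sphere $\Sigma_r$, contradicting the choice of the sequence; this yields the desired uniform $\rho$ and the qualitative $C^2$-closeness. For the ``moreover'' part I would pass from the normal graph to a radial graph: since $\Sigma_r$ is star-shaped about the origin, for $\delta$ small the radial projection onto $\Sigma_r$ is a $C^2$-diffeomorphism, so $M$ can be written as $\set{(1+u(\omega))\,r\,\omega:\omega\in\Sigma_r}$ with $\norm{u}_{C^2(\Sigma_r)}\leq\delta$ after possibly shrinking the original $\delta$, where $u$ encodes the radial deviation of $M$ from $\Sigma_r$.

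The main obstacle is twofold and both points sit in the middle paragraph. First, one must obtain genuine $C^2$ (rather than merely $C^{1,\alpha}$) convergence, which is precisely where the derivative bound $\abs{\n A}_{M_k}\leq C_2$ is essential: it converts into a uniform $C^{2,\alpha}$ estimate on the graph functions, giving compactness in $C^2$ so that the resulting closeness is measured in the norm demanded by the statement. Second, the enclosed volume is not prescribed a priori, so one must combine the area bound (to confine the $M_k$ and extract $V_\infty$) with the continuity of the isoperimetric profile and the uniqueness and rigidity of isoperimetric regions in Schwarzschild in order to identify the $C^2$-limit as an actual coordinate sphere rather than a mere approximate minimizer.
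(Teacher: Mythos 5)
Your proposal is correct and follows essentially the same route as the paper, which states the corollary as a direct application of Theorem \ref{thm3.4} without giving further details: the contradiction/compactness argument you describe is precisely the intended mechanism. Your extra bridging steps (extracting a limit volume $V_\infty$, continuity of $I_S$, and confinement of the $M_k$ via the area and curvature bounds) correctly handle the fact that the corollary, unlike Theorem \ref{thm3.4}, neither fixes the enclosed volume nor assumes the domains lie in a fixed bounded set.
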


In view of Proposition \ref{thm3.2}, it follows that $\abs{\n A}$ is uniformly bounded at time $t=S'$. 
Moreover, the derivatives of the second fundamental form can be bounded up to any order. 
Thus, by the Arzela-Ascoli theorem, we can in fact take the limit as $t\rightarrow S'$. 
This convergence is smooth due to the uniform bounds of $\abs{A}^2$ and $\abs{\n^kA}^2$ for all $k$, as guaranteed by Proposition \ref{thm3.2}. 
Hence, we obtain a smooth hypersurface $M_{S'}$ at the maximal time $t=S'$, which satisfies the same conditions in Theorem \ref{thm1.1} as the initial hypersurface. 
Applying Corollary \ref{cor3.6}, we then  obtain that the range of $\kappa_i(x,S')$ can be refined to $[\f12,2]$. We can repeat the argument  with $M_{S'}$ taken as the initial hypersurface. 
Consequently, the flow exists for $t\in[0, +\infty)$. The proof of long time existence is completed.

\subsection{The convergence of the VPMCF}

This subsection is devoted to establishing the convergence of the volume-preserving mean curvature flow, which essentially relies on the uniqueness of the isoperimetric hypersurface in the Schwarzschild space. We start by analyzing the asymptotic behavior of the mean curvature $H$. 

\begin{lem}\lab{lem3.4}
    The mean curvature $H$ converges uniformly to a constant, i.e. 
    \[
    \lim_{t\rightarrow\infty}\max_{M_t}\abs{H(x,t)-h_t}=0.
    \]
\end{lem}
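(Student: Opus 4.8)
The plan is to first establish $L^2$-in-space convergence of $H-h(t)$ to zero as a genuine limit in $t$ (not merely along a subsequence), and then upgrade this to the stated uniform convergence by interpolating against the uniform higher-order curvature bounds already produced in the long-time existence argument. Throughout I write $w(\cdot,t)=H(\cdot,t)-h(t)$ and record the crucial algebraic fact that $w$ has zero average, since by the very definition of $h(t)$ one has $\int_{M_t}w\,d\mu_t=\int_{M_t}H\,d\mu_t-h(t)\abs{M_t}=0$.

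First I would exploit the area monotonicity \eqref{e-VPMCF-area}. Since the flow exists for all time with principal curvatures confined to $[\tfrac12,2]$, the area $\mathrm{Area}(M_t)$ is nonincreasing and bounded below by $M_*$, hence converges; integrating \eqref{e-VPMCF-area} gives
\[
\int_0^\infty\!\!\int_{M_t}w^2\,d\mu_t\,dt=\mathrm{Area}(M_0)-\lim_{t\to\infty}\mathrm{Area}(M_t)<\infty.
\]
Writing $\phi(t)=\int_{M_t}w^2\,d\mu_t$, this says $\phi\in L^1([0,\infty))$. Next I would differentiate $\phi$: using Lemma \ref{evoeqns} in the form $\pa_tH=\Delta w+w(\abs{A}^2+\bar{R}ic(\nu,\nu))$ and $\pa_t\,d\mu_t=-Hw\,d\mu_t$ (note $\Delta H=\Delta w$ as $h(t)$ is spatially constant), and integrating $2w\,\pa_tw$ by parts, the $h'(t)$ contribution drops out precisely because $\int_{M_t}w\,d\mu_t=0$, leaving
\[
\phi'(t)=-2\int_{M_t}\abs{\n w}^2\,d\mu_t+\int_{M_t}w^2\bigl(2\abs{A}^2+2\bar{R}ic(\nu,\nu)-Hw\bigr)\,d\mu_t.
\]
The uniform bounds $\abs{A},\abs{H},\abs{\bar{R}ic},\abs{w}\le C$ and the bounded area then yield $\phi'(t)\le C$. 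A standard elementary argument then upgrades $\phi\in L^1$ to $\lim_{t\to\infty}\phi(t)=0$: if $\phi(s_j)\ge\eps$ along some $s_j\to\infty$, then $\phi\ge\eps/2$ on an interval of fixed length $\eps/(2C)$ to the left of each $s_j$, and passing to a subsequence so these intervals are disjoint contradicts integrability of $\phi$.

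Finally I would convert this $L^2$ decay into uniform decay. The long-time existence argument, via Proposition \ref{thm3.2} and its higher-order iterates, supplies uniform bounds on $\abs{\n^jA}$ for all $j$ and all $t\ge1$, and since $h(t)$ is spatially constant one has $\n^jw=\n^jH$ for $j\ge1$; hence $\norm{w(\cdot,t)}_{W^{k,2}(M_t)}$ is uniformly bounded for every $k$. Because the $M_t$ stay $C^2$-close to a fixed coordinate sphere their geometry is uniformly controlled, so a Gagliardo--Nirenberg interpolation inequality
\[
\norm{w}_{L^\infty(M_t)}\le C\,\norm{w}_{L^2(M_t)}^{\theta}\,\norm{w}_{W^{k,2}(M_t)}^{1-\theta}
\]
holds with constants independent of $t$, and combining the uniform $W^{k,2}$ bound with $\norm{w}_{L^2(M_t)}^2=\phi(t)\to0$ gives $\max_{M_t}\abs{H-h_t}\to0$.

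The step requiring the most care is the differentiation of $\phi(t)$ and the verification that the $h'(t)$ term vanishes: this rests on the zero-average identity for $w$ and on correctly assembling the curvature terms from Lemma \ref{evoeqns}, after which the uniform bounds make $\phi'\le C$ routine. The interpolation step is standard once the uniform geometry and derivative bounds are in hand.
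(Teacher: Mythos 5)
Your proposal is correct, and it reaches the conclusion by a route that differs in execution from the paper's, though both rest on the same monotonicity. The paper never differentiates the integral quantity $\phi(t)=\int_{M_t}(H-h)^2d\mu_t$: instead it bounds $|\nabla H|$ and $|\partial_t H|$ pointwise from the uniform derivative estimates, and then argues by contradiction at the sup-norm level directly — if $|H-h|=c$ at some space-time point, the Lipschitz bounds force $|H-h|\geq c/2$ on a space-time neighborhood of uniform size $r(c)$, hence a definite area drop $\geq 2r\eta(c)$ there; since the area is monotone and bounded below by $M_*$, only finitely many such drops can occur, so $\max_{M_t}|H-h|\to 0$ with no Sobolev inequality and no integration by parts. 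Your route instead establishes $L^2$ decay via the differential inequality $\phi'\leq C$ (where the zero-average identity $\int_{M_t}(H-h)\,d\mu_t=0$ kills the $h'(t)$ term — a clean observation) together with $\phi\in L^1([0,\infty))$, and then upgrades to uniform decay by Gagliardo--Nirenberg interpolation. What each buys: the paper's argument is more elementary and avoids any need for uniform Sobolev constants on the evolving surfaces, but it implicitly requires time-Lipschitz control of $H-h$, i.e.\ a bound on $h'(t)$ as well as on $\partial_t H$, which the paper does not state (it is true, by differentiating $h=\int H d\mu_t/\int d\mu_t$ and using the uniform bounds); your version sidesteps exactly this issue through the zero-average cancellation, at the cost of needing interpolation constants independent of $t$ — which do hold here, since the surfaces have uniformly bounded second fundamental form and derivatives, area bounded above and below, and stay in a compact region of the Schwarzschild space, but this should be said explicitly rather than attributed only to $C^2$-closeness to a sphere.
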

\begin{proof}
We want to find a uniform bound for the mean curvature $H$ in a space-time neighborhood of a point $(p_1, t_1)\in M\times[0,\infty)$. As in the Euclidean case, since 
\[
\n H(x,t)=g^{ij}\n A_{ij}(x,t),
\]
the uniform bounds on the covariant derivatives of the second fundamental form yield  uniform control on $\n H(x,t)$, making $H$ spatially Lipschitz continuous. 
Recall the evolution equation for $H$ in Lemma \ref{evoeqns},
\[
\f{\pa H}{\pa t}=\Delta H+(h-H)\left(\abs{A}^2+\overline{Ric}(\nu,\nu)\right),
\]
which implies that 
\begin{align}
    \abs{\pd t H} & \leq \abs{\Delta H}+(\abs{h}+\abs{H})\abs{A}^2+\abs{h-H}\abs{\overline{Ric}}.
\end{align}
Therefore, $\abs{\pd t H}$ is bounded because of the uniform bound on $\abs{\n^2A}$. 
To summarize, for some $D>0$, we have the uniform estimate 
\[
\abs{\n H}, \abs{\pa_tH}\leq D.
\]
We now recall that
\[
-\pd t\abs{M_t}=\int_{M_t}(H-h)^2d\mu_t>0.
\]
It follows that for any $\eta>0$, we have
\[
\int_{M_t}(H-h)^2d\mu_t\geq\eta\Longleftrightarrow  \pd t\abs{M_t}\leq-\eta.
\]

For this reason, if at some point $(p_1, t_1)$ in the space-time such that $\abs{H-h}=c$ for some $c>0$, then it remains larger than $c/2$ on a space-time neighbourhood of $(p_1, t_1)$ with radius $r$, where  $r=r(c)$ is uniform. 
Taking into account also the bounds for $\abs{M_t}$, we deduce that $\partial_t \abs{M_t}<-\eta$, for $t\in[t_1-r,t_1+r]$ and some $\eta=\eta(c)$. 
Since $\abs{M_t}$ is monotonically decreasing and bounded below by $\abs{M_t}\geq M^*$, this can happen only for a finite number of intervals for any given $c>0$. This shows that $\abs{H-h}$ tends to zero uniformly, i.e.
\[
\lim_{t\rightarrow\infty}\max_{x\in M_t}\abs{H(x,t)-h(t)}=0.
\]
\end{proof}

To see the full time convergence, we note that the constant mean curvature (CMC) hypersurfaces in the Schwarzschild space with fixed volume are unique. 
This uniqueness implies the full time convergence. 
Suppose otherwise; we would have two sub-sequences, say $M^1_i$ and $M^2_j$, which by our assumptions must converge to two distinct limiting hypersurfaces $M^1_\infty$ and $M^2_\infty$. 
However, $M^1_\infty$ and $M^2_\infty$ are both constant mean curvature hypersurfaces with the same volume, and by Lemma \ref{lem3.4}, their mean curvatures are identical. 
Thus, they must coincide, which is a contradiction. 
This finishes the proof of Theorem \ref{thm1.1}.

\subsection{The long time existence and convergence of the APMCF} In this subsection, we will prove Theorem \ref{thm1.1-2}. Following the same argument as we did for the VPMCF by an iteration argument, we see that if $M_0$ is $\delta$-close in $C^2$-norm to a coordinate sphere $\Sigma_{r_0}$ with the area of $M_0$ equals the area of $\Sigma_{r_0}$, then the APMCF exists for all time and subconverges to some limiting hypersurface. The only difference is the convergence of the flow. We first prove that:

\begin{lem}\lab{lem3.7}
   Along the APMCF, the mean curvature $H$ converges to some constant uniformly, i.e. 
    \[
    \lim_{t\rightarrow\infty}\max_{M_t}\abs{H(x,t)-h_0(t)}=0.
    \]
\end{lem}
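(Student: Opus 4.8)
The plan is to mirror the proof of Lemma \ref{lem3.4} for the VPMCF, replacing the monotone area functional — which is now \emph{constant} along the APMCF by \eqref{e-APMCF-area} — with the enclosed volume. First I would record that $H$ is uniformly Lipschitz in both space and time. Indeed, from the long-time existence argument the covariant derivatives $\abs{\n A}$ and $\abs{\n^2 A}$ are uniformly bounded (Proposition \ref{thm3.2} and its higher-order analogues), so $\abs{\n H}=\abs{g^{ij}\n A_{ij}}\leq D$; and from the APMCF evolution equation $\pa_t H=\Delta H+(H-h_0)(\abs{A}^2+\overline{Ric}(\nu,\nu))$ together with the uniform bound on $\abs{\n^2 A}$ one gets $\abs{\pa_t H}\leq D$. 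Thus $H-h_0$ is controlled on a space-time neighbourhood of any point by its value at that point.

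The key new ingredient is a sign-definite monotone quantity. Since the area is preserved, I would instead use the enclosed volume, which is nondecreasing by \eqref{e-APMCF-vol-2}. The crucial observation is that the definition of $h_0$ gives $\int_{M_t}H(H-h_0)\,d\mu_t=\int_{M_t}H^2\,d\mu_t-h_0\int_{M_t}H\,d\mu_t=0$. Expanding $H(H-h_0)=(H-h_0)^2+h_0(H-h_0)$ and integrating yields
\[
\int_{M_t}(h_0-H)\,d\mu_t=\frac1{h_0}\int_{M_t}(H-h_0)^2\,d\mu_t,
\]
so by \eqref{e-APMCF-vol},
\[
\frac{d}{dt}{\rm Vol}(\Omega_t)=\frac1{h_0}\int_{M_t}(H-h_0)^2\,d\mu_t.
\]
Because the principal curvatures stay pinched in $[\tfrac12,2]$ along the flow, $H$ and hence $h_0$ are bounded between positive constants, so $\frac{d}{dt}{\rm Vol}(\Omega_t)\geq \frac1C\int_{M_t}(H-h_0)^2\,d\mu_t$. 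This is the exact analogue of the identity $-\pa_t\abs{M_t}=\int_{M_t}(H-h)^2\,d\mu_t$ used in Lemma \ref{lem3.4}.

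With this in hand the argument of Lemma \ref{lem3.4} transfers almost verbatim. The volume ${\rm Vol}(\Omega_t)$ is monotone nondecreasing and, since the area is fixed and the isoperimetric inequality holds — equivalently, since $M_t$ stays $C^2$-close to a coordinate sphere and hence encloses a region of uniformly bounded volume — it is bounded above; therefore it converges and $\int_0^\infty \frac{d}{dt}{\rm Vol}(\Omega_t)\,dt<\infty$. If $\abs{H-h_0}=c$ at some $(p_1,t_1)$, the uniform Lipschitz bounds force $\abs{H-h_0}\geq c/2$ on a space-time neighbourhood of radius $r=r(c)$ independent of the point, whence $\int_{M_t}(H-h_0)^2\,d\mu_t\geq\eta(c)>0$ and $\frac{d}{dt}{\rm Vol}(\Omega_t)\geq \eta(c)/C$ for $t\in[t_1-r,t_1+r]$. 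Since the total volume gain is finite, this can occur for only finitely many such intervals for each fixed $c>0$, which forces $\max_{M_t}\abs{H-h_0}\to0$ as $t\to\infty$.

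The main obstacle, and essentially the only genuinely new point compared with the volume-preserving case, is producing the sign-definite monotone quantity: the raw volume derivative $\int_{M_t}(h_0-H)\,d\mu_t$ is not manifestly a square, and it is the identity $\int_{M_t}H(H-h_0)\,d\mu_t=0$ — together with the uniform positive lower bound on $h_0$ coming from the curvature pinching — that rewrites it as a positive multiple of $\int_{M_t}(H-h_0)^2\,d\mu_t$. Everything else, namely the Lipschitz estimates and the finiteness argument, is identical to the volume-preserving case.
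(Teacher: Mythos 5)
Your proposal is correct, and it follows the same overall strategy as the paper's proof (exploit that the enclosed volume is nondecreasing along the APMCF and bounded above because the area is fixed), but the technical execution is genuinely different. The paper bounds the second time-derivative, $\left|\frac{d^2}{dt^2}{\rm Vol}(\Omega_t)\right|\leq\Lambda$, using the uniform bounds $|\nabla^kA|\leq C(k)$ and the constancy of the area, and then argues by contradiction along a sequence $t_i\to\infty$: if $\frac{d}{dt}{\rm Vol}(\Omega_t)(t_i)\geq c>0$, the second-derivative bound keeps the derivative at least $c/2$ on intervals of fixed length, forcing unbounded volume growth; having shown $\int_{M_t}(h_0-H)\,d\mu_t\to0$, it concludes with the one-line remark that ``the lemma follows by standard elliptic estimates.'' You instead observe the algebraic identity $\int_{M_t}H(H-h_0)\,d\mu_t=0$, which converts the volume derivative into the sign-definite quantity $\frac{1}{h_0}\int_{M_t}(H-h_0)^2\,d\mu_t$, and then you run the proof of Lemma \ref{lem3.4} verbatim (space-time Lipschitz bounds on $H-h_0$ plus the finitely-many-intervals argument applied to the monotone bounded volume). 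What your route buys is precisely the step the paper leaves implicit: since $h_0-H$ has no pointwise sign, passing from the $L^1$ statement $\int_{M_t}(h_0-H)\,d\mu_t\to0$ to uniform convergence requires converting it into an $L^2$ quantity, which is exactly your identity combined with the uniform two-sided bounds on $h_0$ coming from the curvature pinching; your argument therefore needs no separate appeal to elliptic estimates. What the paper's route buys is that it never needs the identity or the positive lower bound on $h_0$ for the monotonicity step itself, only the crude second-derivative bound. One small caveat, present equally in the paper's Lemma \ref{lem3.4}: the claim that $|H-h_0|\geq c/2$ persists on a space-time neighborhood also uses Lipschitz continuity of $h_0(t)$ in time, which should be noted (it follows from the evolution equations, the curvature bounds, and the lower bounds on $\int_{M_t}H\,d\mu_t$ and the area), but this is glossed over in both arguments.
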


\begin{proof} By (\ref{e-APMCF-vol-2}), we see that
    \begin{equation*}
        \frac{d}{dt}{\rm Vol}(\Omega_t)=\int_{M_t}(h_0-H)d\mu_t=\frac{\int_{M_t}H^2d\mu_t}{\int_{M_t}Hd\mu_t}|M_t|-\int_{M_t}Hd\mu_t.
\end{equation*}
Since $|\nabla^k A|\leq C(k)$ and ${\rm Area}(M_t)={\rm Area}(M_0)$, we see that 
    \begin{equation}\label{e-3.10}
        \left|\frac{d^2}{dt^2}{\rm Vol}(\Omega_t)\right|\leq \Lambda,
\end{equation}
for some constant $\Lambda$ independent of $t$. We will show that
    \begin{equation*}
        \lim_{t\to\infty}\frac{d}{dt}{\rm Vol}(\Omega_t)=0.
\end{equation*}
It suffices to show that
    \begin{equation*}
        \limsup_{t\to\infty}\frac{d}{dt}{\rm Vol}(\Omega_t)=0.
\end{equation*}
Suppose not, then there will be a sequence $t_i$ tending to infinity and a positive constant $c>0$ such that
    \begin{equation*}
        \frac{d}{dt}{\rm Vol}(\Omega_t)(t_i)\geq c>0.
\end{equation*}
By (\ref{e-3.10}), we see that if we set $\tau=\frac{c}{2\Lambda}$, then for $t\in [t_i-\tau, t_i+\tau]$
    \begin{equation*}
        \frac{d}{dt}{\rm Vol}(\Omega_t)(t)\geq \frac{c}{2}>0.
\end{equation*}
Since $t_i$ tends to infinity, up to a subsequence, we may assume that $t_i+\tau<t_{i+1}-\tau$ for each $i$. Now using the fact that the volume is nondecreasing along the APMCF, we compute
\begin{align*}
    {\rm Vol}(\Omega_{t_i+\tau})
    &\geq {\rm Vol}(\Omega_{t_i-\tau})+c\tau\\
    &\geq {\rm Vol}(\Omega_{t_{i-1}+\tau})+c\tau\\
    &\geq {\rm Vol}(\Omega_{t_{i-1}-\tau})+2c\tau\\
    &\geq\cdots\\
    &\geq {\rm Vol}(\Omega_{t_0+\tau})+ic\tau\to\infty,
\end{align*}
as $i\to\infty$. This contradicts the fact that
    \begin{equation*}
       {\rm Vol}(\Omega_t)\leq {\rm Area}(M_t)^{\frac{n+1}{n}}I_S^{\frac{1}{n+1}}={\rm Area}(M_0)^{\frac{n+1}{n}}I_S^{\frac{1}{n+1}}.
\end{equation*}
Therefore, we must have
    \begin{equation*}
        \lim_{t\to\infty}\frac{d}{dt}{\rm Vol}(\Omega_t)=\lim_{t\to\infty}\int_{M_t}(h_0-H)d\mu_t=0.
\end{equation*}
Then the lemma follows by standard elliptic estimates.
\end{proof}

Having already derived the estimates for all derivatives of the second fundamental form, we conclude that a subsequence of the flow converges to a constant mean curvature (CMC) hypersurface. Now the full convergence of the flow follows via precisely the same argument as for the VPMCF using the uniqueness of the CMC hypersurfaces in the Schwarzschild space. This completes the proof of Theorem \ref{thm1.1-2}.

\vspace{.2in}

\section{Stability of volume and area preserving mean flow in asymptotically Schwarzschild spaces}

In this section, we aim to generalize the previous results of the corresponding volume and area preserving flows to asymptotically Schwarzschild spaces under different initial conditions. Precisely, we will show that if an initial hypersurface is sufficiently close to  $\Sigma_V$, the isoperimetric hypersurface constructed by Eichmair and Metzger (\cite{EichmairInventMath2013}), then the flow will exist long time and converge to a limiting hypersurface exponentially fast. We begin by recalling the definition of asymptotically Schwarzschild spaces. We use the same notation $(N^{n+1},\bar{g})$ to denote the ambient space.
\begin{defn}
    A connected complete Riemannian manifold $(N^{n+1},\bar{g})$ is called to be $C^k$-close to Schwarzschild space, if there exists a bounded open set $U\subset M$ so that $M\setminus U$  is diffeomorphic to $\R^{n+1}\setminus B_1(0)$ and in the coordinates induced by $x = (x_1,x_2,\cdots,x_{n+1})$ we have
 that, for $m>0$ and $k\geq0$ is an integer,
 \[
 \sum_{l=0}^kr^{2+l}\abs{\pa^l(\bar{g}-g_m)_{ij}}\leq C,
 \]
 where $(g_m)_{ij} = (1 + \f m{2r^{n-1}})^{\f4{n-1}}\delta_{ij}$, $r=\sqrt{ x^2_1 +\cdots+x^2_{n+1}}$.
\end{defn}

Eichmair and Metzger (\cite{EichmairInventMath2013}) proved that if $(N^{n+1},\bar{g})$ is $C^2$-close to Schwarzschild space of mass $m>0$, then there exists $V_0>0$ such that for all $V\geq V_0$, there is a unique isoperimetric hypersurface $\Sigma_V$ with enclosed volume $V$. Furthermore, it is a strictly stable CMC hypersurface, and $C^2$-close to a coordinate sphere. We will use the following Corollary, whose proof is similar to Theorem \ref{thm3.4} and Corollary \ref{cor3.6}:

\begin{cor}\label{cor4.2}
Let $(N^{n+1}, \bar{g})$ be $C^2$-close to Schwarzschild space. For any $\delta_0>0$ and hypersurface $M$, homologous to the horizon, there exists $$\rho=\rho(\norm{A_M},\norm{\n A_M},Area(M),\delta_0,n)>0$$ such that if 
the isoperimetric profile of $M$ satisfies $I_{M}\leq I_{\Sigma_V}+\rho$, where $I_M=\f{\abs{M}^{n+1}}{\abs{\Omega}^n}$, $\Omega$ denotes the domain enclosed by $M$ and the horizon $\Sigma_0$ with $|\Omega|=V$, and $\norm{A_M},\norm{\n A_M}$ are all bounded, then $M$ must be $C^2$-close to $\Sigma_V$. 
Moreover, there exists a smooth function $u:\Sigma_V\rightarrow M$ such that $M$ can be represented as a radial graph over $\Sigma_V$ with $\norm{u}_{C^2(\Sigma_V)}\leq\delta_0$.
\end{cor}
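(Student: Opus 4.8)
The plan is to argue by contradiction and reduce to the compactness argument behind Theorem \ref{thm3.4} and Corollary \ref{cor3.6}, with the coordinate sphere replaced by the Eichmair--Metzger isoperimetric hypersurface $\Sigma_V$. Fix $\delta_0>0$, the volume $V\geq V_0$, and constants $C_1,C_2,C_3$; suppose no admissible $\rho$ exists. Then for each $k$ there is a hypersurface $M_k$, homologous to the horizon, enclosing a domain $\Om_k$ with $\abs{\Om_k}=V$, satisfying $\abs{A_{M_k}}\leq C_1$, $\abs{\n A_{M_k}}\leq C_2$, $\abs{M_k}\leq C_3$ and $I_{M_k}\leq I_{\Sigma_V}+\frac1k$, yet such that $M_k$ cannot be written as a radial graph over $\Sigma_V$ with $\norm{u}_{C^2(\Sigma_V)}\leq\delta_0$. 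I will extract a convergent subsequence and identify its limit with $\Sigma_V$, contradicting this last failure.

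First I would run the compactness machinery exactly as in the proof of Theorem \ref{thm3.4}. The $C^2$-closeness of $(N,\bar g)$ to the Schwarzschild metric yields uniform ambient bounds $\abs{\bar{R}m},\abs{\bar{\n}\bar{R}m}\leq\Lambda$ and uniform control of the Christoffel symbols in the asymptotic coordinates, so the Uniform Graph Lemma (Lemma \ref{lem3.5}) applies with a radius $R=R(C_1,N)>0$: around every point $p$, each $M_k\cap W(p,R)$ is a finite union of graphs over $B(p,R)$ whose defining functions have derivatives uniformly bounded up to the order controlled by $C_1$ and $C_2$. By Arzela--Ascoli together with a diagonal argument over a countable cover, a subsequence of $M_k$ converges in $C^2$ on compact sets to a limiting hypersurface $M_\infty$, a priori with some multiplicity $s$. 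The uniform area bound $\abs{M_k}\leq C_3$, combined with the uniform lower area bound for each graph sheet established in Theorem \ref{thm3.4}, forces $s$ to be finite.

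The decisive step is to upgrade finite multiplicity to $s=1$ by means of the isoperimetric constraint, and then to invoke uniqueness. If the limit were attained with multiplicity $s\geq2$, then $\abs{M_k}\to s\abs{M_\infty}$ while $\abs{\Om_k}=V$ forces $\abs{\Om_\infty}=V$ (the sheets collapse onto $M_\infty$ in a thin neighborhood, so $\Om_k\to\Om_\infty$ in measure), and hence $I_{M_k}\to s^{n+1}\abs{M_\infty}^{n+1}/V^n\geq s^{n+1}I_{\Sigma_V}$, contradicting $I_{M_k}\leq I_{\Sigma_V}+\frac1k$. Hence $s=1$, the convergence has multiplicity one, $\abs{\Om_\infty}=V$, and $I_{M_\infty}=\lim_k I_{M_k}=I_{\Sigma_V}$. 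Thus $M_\infty$ is an embedded hypersurface homologous to the horizon that attains the infimum defining $I_{\Sigma_V}$, i.e.\ $\Om_\infty$ is isoperimetric of volume $V$. Since $V\geq V_0$, the Eichmair--Metzger uniqueness theorem (\cite{EichmairInventMath2013}) identifies the isoperimetric region of volume $V$ uniquely with the one bounded by $\Sigma_V$, whence $M_\infty=\Sigma_V$. The $C^2$-convergence $M_k\to\Sigma_V$ then shows that for $k$ large $M_k$ is a radial graph over $\Sigma_V$ with $C^2$-norm below $\delta_0$, contradicting the choice of $M_k$; this proves the existence of $\rho$, and the graph representation in the ``moreover'' clause is precisely this $C^2$-closeness.

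The main obstacle is this last step: unlike the exact Schwarzschild case of Corollary \ref{cor3.6}, where the isoperimetric hypersurfaces are explicit coordinate spheres whose uniqueness is classical, here one must rely on the existence, strict stability, and above all the \emph{uniqueness} of the isoperimetric region bounded by $\Sigma_V$ for $V\geq V_0$ supplied by Eichmair--Metzger, and on the fact that the near-optimal isoperimetric ratio rules out higher multiplicity. A secondary technical point is to ensure that the asymptotic closeness of the metric is strong enough to furnish the ambient curvature and Christoffel bounds used in Lemma \ref{lem3.5} and in the convergence of the graph functions; this is where the decay hypotheses on $\bar g-g_m$ enter, and it may be necessary to assume closeness in a slightly higher $C^k$ norm in order to control $\bar{\n}\bar{R}m$.
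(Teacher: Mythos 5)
Your proposal is correct and takes essentially the same approach the paper intends: the paper gives no separate proof of Corollary \ref{cor4.2}, saying only that it is proved like Theorem \ref{thm3.4} and Corollary \ref{cor3.6}, and your argument is exactly that adaptation --- uniform graph lemma plus Arzela--Ascoli compactness, the near-optimal isoperimetric ratio ruling out multiplicity $s\geq 2$, and identification of the multiplicity-one limit, now via the Eichmair--Metzger uniqueness of the isoperimetric hypersurface $\Sigma_V$ in place of coordinate-sphere uniqueness. The technical caveats you flag (ambient bounds coming from $C^2$-closeness, and the implicit dependence of $\rho$ on the fixed volume $V$) are genuine but are left equally implicit in the paper itself.
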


For convenience, we restate the existence and convergence theorem in asymptotically Schwarzschild manifolds as follows:

\begin{thm}
    Let $(N^{n+1}, \bar{g})$ be a $C^2$ asymptotically Schwarzschild space. There exists a constant $\delta > 0$ such that the following holds: if a hypersurface $M_0$ satisfies two conditions:
    \begin{itemize}
        \item  the volume of the region enclosed by $M_0$ and the horizon satisfies $V \geq V_0$;
        \item $M_0$ is $\delta$-close to $\Sigma_V$ in the $C^2$ norm,
         \end{itemize}
    then the volume- and area-preserving mean curvature flow exists for all time and converges to $\Sigma_V$ as $t \to \infty$.
\end{thm}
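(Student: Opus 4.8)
The plan is to reproduce, in the asymptotically Schwarzschild setting, the two-stage scheme behind Theorems~\ref{thm1.1} and~\ref{thm1.1-2} for long-time existence and subconvergence, and then to upgrade subconvergence to exponential convergence by exploiting the \emph{strict stability} of the Eichmair--Metzger hypersurface $\Sigma_V$. For the existence part I would run essentially verbatim the iteration of Section~3: after rescaling so that the principal curvatures of the reference hypersurface are normalized, define the maximal time set on which $\tfrac14\le\kappa_i\le4$, use the tensor maximum principle (via Miglioranza's function $u$) to bound this set below by a dimensional constant, and invoke the gradient estimate of Proposition~\ref{thm3.2} to control $|\n A|$ at the maximal time. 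The only structural input that changes is the perturbation step: since volume is preserved along the VPMCF (and area along the APMCF) while area decreases (resp.\ volume increases up to the isoperimetric bound), the isoperimetric ratio $I(t)=|M_t|^{n+1}/|\Om_t|^n$ stays squeezed in a small interval around $I_{\Sigma_V}$, using the isoperimetric minimality of $\Sigma_V$ from Eichmair--Metzger for the lower bound. Corollary~\ref{cor4.2} then forces $M_{S'}$ to be $C^2$-close to $\Sigma_V$, refining the pinching back to $[\tfrac12,2]$ and allowing the iteration to continue, so the flow exists for all $t\in[0,\infty)$ and, by Arzel\`a--Ascoli with the higher-order estimates, subconverges smoothly to a CMC hypersurface carrying the prescribed conserved quantity. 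The Eichmair--Metzger uniqueness of isoperimetric hypersurfaces identifies this limit as $\Sigma_V$ (for the APMCF, the preserved area selects $\Sigma_{V'}$ with $V'$ close to $V$ in the foliation).

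Next I would show the flow actually enters and stays in a small neighborhood of $\Sigma_V$. Mirroring Lemma~\ref{lem3.4} for the VPMCF and Lemma~\ref{lem3.7} for the APMCF, the monotonicity of $|M_t|$ (resp.\ $\mathrm{Vol}(\Om_t)$) together with the uniform bounds on $|\n H|$ and $|\pa_t H|$ forces $\max_{M_t}|H-h(t)|\to0$. Combined with the smooth subconvergence, this lets me write the whole flow, for all large $t$, as a normal graph $M_t=\{\exp_x(w(x,t)\nu(x)):x\in\Sigma_V\}$ with $\|w(\cdot,t)\|_{C^2(\Sigma_V)}\to0$.

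The core new step is the exponential rate. Linearizing the mean curvature of the graph gives $H=H_{\Sigma_V}-Lw+Q$, where $L=\Delta_{\Sigma_V}+|A|^2+\overline{Ric}(\nu,\nu)$ is the Jacobi operator of $\Sigma_V$ and $Q=Q(w,\n w,\n^2 w)$ is quadratically small; the VPMCF speed $h-H$ (resp.\ the APMCF speed $h_0-H$) then equals $Lw-\langle Lw\rangle$ to leading order, where $\langle\,\cdot\,\rangle$ denotes the $d\mu_t$-average enforced by the conservation law. The conservation of volume (resp.\ area) pins $\int_{\Sigma_V}w$ to leading order, so the perturbation lives, modulo quadratic corrections, on the mean-zero subspace; strict stability of $\Sigma_V$ means precisely that $-L$ restricted to this subspace has first eigenvalue $\lambda_1>0$. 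I would then introduce an energy $\mathcal E(t)=\|H-h\|_{L^2(M_t)}^2$ (or $\|w-\langle w\rangle\|_{L^2}^2$), differentiate it along the flow, and use the spectral gap together with the $C^2$-smallness from the previous step to absorb $Q$ and the deviation of $\bar g$ from the exact Schwarzschild metric, obtaining $\tfrac{d}{dt}\mathcal E\le-2(\lambda_1-\eps)\mathcal E$ and hence $\mathcal E(t)\le Ce^{-c t}$. Parabolic Schauder estimates and interpolation against the uniform higher-order bounds then upgrade this to exponential decay of $\|w(\cdot,t)\|_{C^k}$ for every $k$, giving smooth exponential convergence $M_t\to\Sigma_V$.

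The main obstacle is this last stability argument. The delicate points are: verifying that the averaging projection built into the flow coincides, to leading order, with the $L^2(\Sigma_V)$-orthogonal projection off the constants, so that the strict-stability gap is genuinely applicable; controlling all nonlinear error terms $Q$ \emph{and} the extra terms produced by the ambient metric being only asymptotic to, not exactly, Schwarzschild, uniformly by the $C^2$-smallness of $w$, so that they are absorbed into the linear decay; and closing the bootstrap that propagates the $L^2$ exponential decay to all derivatives. By contrast, in the exact Schwarzschild theorems only qualitative convergence was needed and followed from uniqueness alone; the quantitative rate here genuinely requires the spectral information encoded in strict stability, in the spirit of Huisken--Yau's original decay estimates.
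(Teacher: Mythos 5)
Your proposal is correct in outline, and for the long-time existence and subconvergence stages it is essentially the paper's argument: the paper likewise runs the Section~3 iteration (curvature pinching on a maximal time set, the gradient estimate of Proposition~\ref{thm3.2}, and the isoperimetric-ratio squeeze feeding into Corollary~\ref{cor4.2}, with Eichmair--Metzger's refined curvature estimates quoted for the asymptotic setting), then mirrors Lemmas~\ref{lem3.4} and~\ref{lem3.7} to force $|H-h|\to 0$ and identifies the sublimit via uniqueness of CMC hypersurfaces in a small neighborhood of $\Sigma_V$ granted by strict stability. Where you genuinely diverge is the exponential rate. The paper writes the flow, for large $t$, as a graph over the limiting CMC hypersurface, linearizes the graph evolution equation to obtain the nonlocal Jacobi-type operator $L$, and then \emph{outsources} the entire nonlinear analysis to the principle of linearized stability for quasilinear parabolic equations: since strict stability makes the spectrum of $L$ negative, Theorem 9.2.2 of Lunardi \cite{Analytic1995}, applied as in Escher--Simonett \cite{TheVolume1998}, yields global existence and exponential decay for small graphical data, and uniqueness of the flow transfers this back to $M_t$. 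You instead propose a direct Huisken--Yau-style energy method: a differential inequality $\frac{d}{dt}\mathcal{E}\le -2(\lambda_1-\eps)\mathcal{E}$ for $\mathcal{E}=\|H-h\|_{L^2}^2$ using the spectral gap on the mean-zero subspace, followed by interpolation and bootstrap. Both routes hinge on exactly the same input (strict stability of $\Sigma_V$), and your method is viable: differentiating $\mathcal{E}$ along the VPMCF, the $h'(t)$ term drops by the zero-mean constraint, and the stability inequality transfers from $\Sigma_V$ to $M_t$ up to $C\eps$ errors once $\|w\|_{C^2}$ is small. What the paper's route buys is that the absorption of the quadratic and nonlocal errors and the upgrade to all $C^k$ norms come for free from maximal regularity theory; what your route buys is a self-contained, elementary proof with an explicit rate $\lambda_1$, at the price of doing by hand precisely the three delicate steps you flag (matching the flow's averaging projection with the $L^2$ projection, absorbing the asymptotically-Schwarzschild metric errors, and closing the bootstrap). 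One small point in your favor: your parenthetical remark that the APMCF limit is the leaf $\Sigma_{V'}$ selected by the preserved \emph{area} (rather than $\Sigma_V$ itself, unless the areas are normalized to agree as in Theorem~\ref{thm1.1-2}) is a precision the paper's proof glosses over.
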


\begin{proof} 
First, note that since $\Sigma_V$ is a strictly stable CMC hypersurface, there exists a constant $\delta_1 > 0$ such that no other CMC hypersurfaces exist within the $\delta_1$-neighborhood of $\Sigma_V$ (in the $C^2$ norm). The assumption implies the initial hypersurface $M_0$ satisfies $\frac{1}{2}\underline{\kappa} \leq \kappa_i \leq 2\overline{\kappa}$, where $\kappa_i$ denotes the principal curvatures of $M_0$, and $\underline{\kappa}$, $\overline{\kappa}$ are the lower and upper bounds of the principal curvatures of $\Sigma_V$, respectively. We also have $I_{M_0} < I_{\Sigma_V} + \rho$ for some sufficiently small $\rho$. The subsequent argument follows similarly to that in the Schwarzschild space case.

To elaborate, we first define the maximal existence time of the flows. If this time is finite, we can use curvature estimates to prove the flow extends beyond this maximal time. Note that the closeness of the isoperimetric ratio to that of the isoperimetric surface guarantees refined estimates for the second fundamental form. This is exactly what Eichmair and Metzger proved (see Theorem 9 in \cite{EichmairLarge2013}), and thus implies an extension property. This extension property, in turn, ensures the long-time existence of the flows.  

To establish subconvergence, we employ a similar strategy to show that the velocity — specifically, $(H - h)$ (for the volume-preserving case) and $(H - h_0)$ (for the area-preserving case) — will go to zero as $t \to \infty$. This follows from the monotonicity of both flows; then the pointwise limit is derived via elliptic estimates. If $\delta < \delta_1$ is sufficiently small, it is straightforward to see by Corollary \ref{cor4.2} that $M_t$ remains within the $\delta_1$-neighborhood of $\Sigma_V$. Finally, the convergence of the entire flow follows from the uniqueness of CMC hypersurfaces in this neighborhood.

Next we will use the center manifolds analysis to show that the entire flow converges to the limiting hypersurface exponentially fast \cite{Analytic1995}.
    To see this, we write the hypersurface as a graph over some coordinate sphere $\Sigma_0$, that is, 
    \[
    \Sigma_t=\set{x+u(x,t)\nu_0(x)|x\in \Sigma_0,\nu_0 \textrm{ is the outward unit normal of }\Sigma_0}.
    \]
    If we write the embedding as 
    \[
    \Phi(x)=x+u(x)\nu_0(x),
    \] 
then
\[\Phi_i(x)=e_i+u_i\nu_0(x)+u(x)\bar{\n}_{e_i}\nu_0=e_i+u_i\nu_0(x)+uA_{ik}e_k,
\]
where $e_i$ is the local frame in $\Sigma_0$ and $A_{ik}=\langle\bar{\nabla}_{e_i}\nu_0, e_k\rangle$. The induced metric is
    \[   g_{ij}=\sigma_{ij}+u_iu_j+u(\sigma_{il}A_{jl}+\sigma_{jk}A_{ik})+u^2A_{ik}A_{jl}\sigma_{kl},
    \]
    where $\sigma_{ij}$ is the induced metric on $\Sigma_0$. 
    The  normal vector of $\Sigma_t$ is 
    \bel{eqn4.2}
    \nu(\Phi(x))=(1+uH_0)\nu_0(x)-u_ie_i,
    \qe
    where $H_0=\sigma^{ij}A_{ij}$ is the mean curvature of $\Sigma_0$.
    As calculated in \rf{ainj}, we have the mean curvature expression, 
    \[
    H=-\f1Wg^{ij}u_{ij}+\mathrm{l.o.t},
    \]
    where $W^2=1+\bar{g}^{ij}u_iu_j$.
   Thus, we have the following evolution equation,
   \[
   \f{\pa u}{\pa t}\nu_0(x)=(h-H)\nu(x,t)=\f1W\left(\bar{g}^{ij}-\f1{W^2}\bar{g}^{ip}\bar{g}^{jq}u_pu_q\right)u_{ij}+\mathrm{l.o.t}.
   \]
   which is equivalent to 
   \[
  \f{\pa}{\pa t} u=(h-H)\<\nu_0(x),\nu(x,t)\>^{-1}.
   \]
   Combining the equation \rf{eqn4.2}, we have the following evolution equation
   \bel{eqn4.3}
   \f{\pa}{\pa t} u=\f{h_u-H_u}{1+uH_0}.
   \qe
   Here, we emphasize the dependence of the mean curvature and normalized mean curvature on the graph function $u$.
   We now apply the above calculation to our setting. Let $\Sigma_\infty$ denote the limiting hypersurface, which has constant mean curvature-this means that when calculating the variation in the evolution equation \rf{eqn4.3}, we only need to compute the term involving $(h-H)$ since $(h_\infty-H_\infty)=0$.
   Now, suppose we have a family of graph representations over the limiting hypersurface $\Sigma_\infty$
   \[
   \Sigma_\eps=\set{x+\eps \eta(x)\nu_\infty(x)|x\in\Sigma_\infty}.
   \]
   Let $V_\eps=\pa_\eps \phi_\eps=\eta\nu_\infty$ be the variation field. A direct computation gives that 
   \[
   \pa_\eps H_\eps|_{\eps=0}=-\Delta\eta-\left(\bar{R}ic(\nu_\infty,\nu_\infty)+\abs{A}^2\right)\eta,
   \]
   where $\Delta=\Delta_\Sigma$ is the Laplace-Beltrami operator on $\Sigma$. Also the variation of the volume form is given by 
   \[
   \pa_\eps d\mu_\eps=\f12tr_g\dot{g}=\textrm{div} V_\eps=\eta H.
   \]
   It follows that 
   \begin{align*}
   &\pa_\eps(h_\eps-H_\eps)|_{\eps=0}\\
   = &-\fint\left(\bar{R}ic(\nu_\infty,\nu_\infty)+\abs{A}\right)\eta d\mu+\fint H^2\eta d\mu
   + \Delta\eta+(\bar{R}ic(\nu_\infty,\nu_\infty)+\abs{A}^2)\eta\\
   = &\Delta\eta+(\bar{R}ic(\nu_\infty,\nu_\infty)+\abs{A}^2)\eta+\fint(H^2-\abs{A}^2-\bar{R}ic(\nu_\infty,\nu_\infty))\eta d\mu.
   \end{align*}
   Hence we obtain the following linearization of the evolution equation \rf{eqn4.3} at the limiting hypersurface
   \[
   \pa_t\eta=\Delta\eta+(\bar{R}ic(\nu_\infty,\nu_\infty)+\abs{A}^2)\eta+\fint(H^2-\abs{A}^2-\bar{R}ic(\nu_\infty,\nu_\infty)\eta d\mu.
   \]
   Denote by 
   \[
L\eta=\Delta\eta+\bar{R}ic(\nu_\infty,\nu_\infty)\eta+\fint(H^2-\bar{R}ic(\nu_\infty,\nu_\infty)-\abs{A}^2)\eta d\mu.
   \]
Now the full convergence follows by the argument in \cite{Volume2024,Analytic1995}, as the operator $L$ only has  negative eigenvalues. Specifically, since $M_{t_{i}}$ converges smoothly to the
limiting CMC hypersurface $\Sigma_\infty$ along a sequence  $t_{i} \to \infty$, for any sufficiently small $\varepsilon > 0$, there exists a sufficiently large time $t_{*}$  such that the oscillation of $u(\cdot, t_{i}) - u_\infty$ is less than $\varepsilon$ for all $t_{i} \geq t_{*}$. 
Write $M_{t}$ (for $t \geq t_{*}$) as the graph of the radial function $u(\cdot, t)$ over $\Sigma_\infty$. 
Since the oscillation of $u(\cdot,t_*)-u_\infty$ is already sufficiently small,  the argument in \cite{TheVolume1998}combined with Theorem 9.2.2 in \cite{Analytic1995} implies that the solution $u(\cdot,t)$ starting at $u(\cdot,t_*)$ exists globally in time and converges exponentially  to 0. 
This means that the hypersurface $\Sigma_t= \textrm{graph}u(\cdot,t)$ solves the flow equation  with initial condition $\bar{\Sigma}_{t_*}$.
By uniqueness, $\overline{\Sigma}_t$ coincides with $\Sigma_t$ for $t\geq t_*$; hence, the solution $M_t$ of (1.1) with initial condition $M_0$ converges exponentially  to the limiting CMC hypersurface $\Sigma_{\infty}$ as $t\to\infty$. 
\end{proof}

\textbf{Conflict of Interest} The authors have no conflict of interest to declare.

\textbf{Data availability} The authors declare no datasets were generated or analysed during the current study.

\bibliography{refofconv}
\bibliographystyle{plain}

\date{November 2021}

\end{document}